\newcommand{\mcm}[3]{\newcommand{#1}[#2]{{\ensuremath{#3}}}} 
\mcm{\tuple}{1}{\langle #1 \rangle}
\mcm{\name}{1}{\ulcorner #1 \urcorner}
\mcm{\Nbb}{0}{\mathbb{N}}
\mcm{\Zbb}{0}{\mathbb{Z}}
\mcm{\Rbb}{0}{\mathbb{R}}
\mcm{\Cbb}{0}{\mathbb{C}}
\mcm{\Qbb}{0}{\mathbb{Q}}
\mcm{\Bcal}{0}{\cal B}
\mcm{\Ccal}{0}{\cal C}
\mcm{\Dcal}{0}{\cal D}
\mcm{\Ecal}{0}{\cal E}
\mcm{\Fcal}{0}{\cal F}
\mcm{\Gcal}{0}{\cal G}
\mcm{\Hcal}{0}{\cal H}
\mcm{\Ical}{0}{\cal I}
\mcm{\Jcal}{0}{\cal J}
\mcm{\Kcal}{0}{\cal K}
\mcm{\Lcal}{0}{\cal L}
\mcm{\Mcal}{0}{\cal M}
\mcm{\Ncal}{0}{\cal N}
\mcm{\Ocal}{0}{{\cal O}}
\mcm{\Pcal}{0}{{\cal P}}
\mcm{\Qcal}{0}{{\cal Q}}
\mcm{\Rcal}{0}{{\cal R}}
\mcm{\Scal}{0}{{\cal S}}
\mcm{\Tcal}{0}{{\cal T}}
\mcm{\Ucal}{0}{{\cal U}}
\mcm{\Vcal}{0}{{\cal V}}
\mcm{\Wcal}{0}{{\cal W}}
\mcm{\Xcal}{0}{{\cal X}}
\mcm{\Ycal}{0}{{\cal Y}}
\mcm{\Mfrak}{0}{\mathfrak M}
\mcm{\restric}{0}{\upharpoonright}
\mcm{\upset}{0}{\uparrow}
\mcm{\onto}{0}{\twoheadrightarrow}
\mcm{\smallNbb}{0}{{\small \mathbb{N}}}
\DeclareMathOperator{\preop}{op}
\mcm{\op}{0}{^{\preop}}
\newcommand{\theoremize}[2]{\newaliascnt{#1}{thm} \newtheorem{#1}[#1]{#2} \aliascntresetthe{#1}}
\theoremstyle{plain}
\newtheorem{thm}{Theorem}[section]
\theoremstyle{definition}
\theoremstyle{plain}
\title{A Liouville hyperbolic souvlaki}
\author[1]{Johannes Carmesin}
\author[2]{Bruno Federici\thanks{Supported by an EPSRC grant EP/L505110/1.}}
\author[3]{Agelos Georgakopoulos\thanks{Supported by EPSRC grant EP/L002787/1, and by the European Research Council (ERC) under the European Union’s Horizon 2020 research and innovation programme (grant agreement No 639046).}}
\affil[1]{University of Cambridge, Wilberforce Road CB3 0WB}
\affil[2,3]{{Mathematics Institute}\\
	{University of Warwick}\\
	{CV4 7AL, UK}\\}
\newcommand{\sm}{\setminus}
\newcommand{\comm}[1]{}
\newcommand{\labtequ}[2]{
 \begin{equation} \label{#1} 	\begin{minipage}[c]{0.9\textwidth}  #2 \end{minipage} \ignorespacesafterend \end{equation} }
\newcommand{\N}{\ensuremath{\mathbb N}}
\newcommand{\defi}[1]{{\emph{#1}}}
\newcommand{\rw}{random walk}
\renewcommand{\iff}{if and only if}
\newcommand{\fe}{for every}
\newcommand{\Fe}{For every}
\newcommand{\st}{such that}
\newcommand{\ti}{there is}
\newtheorem{problem}{{Problem}}
\newcommand{\shf}{sharp harmonic function}
\newcommand{\labtequstar}[1]{ \begin{equation*}  	\begin{minipage}[c]{0.9\textwidth}  #1 \end{minipage} \ignorespacesafterend \end{equation*} }
\newcommand{\ca}{\ensuremath{\mathcal A}}
\newcommand{\PrI}[1]{\ensuremath{\mathbb{P}\left[#1\right]}}
\newcommand{\PrII}[2]{\ensuremath{\mathbb P_{#1}\left[#2\right]}}
\newcommand{\del}{\ensuremath{\delta}}
\newcommand{\eps}{\ensuremath{\epsilon}}
\newcommand{\Lr}[1]{Lemma~\ref{#1}}
\newcommand{\Tr}[1]{Theorem~\ref{#1}}
\newcommand{\Sr}[1]{Section~\ref{#1}}
\newcommand{\Cr}[1]{Corollary~\ref{#1}}
\newcommand{\Cnr}[1]{Con\-jecture~\ref{#1}}
\newcommand{\Ex}{\mathbb E}
\newcommand{\coc}{cocircular}
\newcommand{\fig}[1]{Figure~\ref{#1}}
\begin{document}
\maketitle
	
\begin{abstract}
We construct a transient bounded-degree graph no transient subgraph of which embeds in any surface of finite genus.

Moreover, we construct a transient, Liouville, bounded-degree, Gromov--hyperbolic graph with trivial hyperbolic boundary that has no transient subtree. This answers a question of Benjamini. This graph also yields a (further) counterexample to a conjecture of Benjamini and  Schramm.

\end{abstract}

\section{Introduction}

A well-known result of Benjamini \& Schramm \cite{BeSchrChee} states that every non-amenable graph contains a non-ambenable tree. This naturally motivates seeking for other properties that imply a subtree with the same property. However, there is a simple example of a transient graph that does not contain a transient tree \cite{BeSchrChee} (such a graph had previously also been obtained by McGuinness \cite{McGuinness}). We improve this by constructing ---in \Sr{Kr}---  a transient bounded-degree graph no transient subgraph of which embeds in any surface of finite genus (even worse, every transient subgraph has the complete graph $K^r$ as a minor for every $r$). This answers a question of I.~Benjamini (private communication).

Given these examples, it is natural to ask for conditions on a transient graph that would imply a transient subtree. In this spirit, Benjamini \cite[Open~Problem~1.62]{BenCoa} asks whether hyperbolicity is such a condition. We answer this in the negative by constructing ---in \Sr{subtree}--- a transient hyperbolic (bounded-degree) graph that has no transient subtree. While preparing this manuscript, T.~Hutchcroft and A.~Nachmias (private communication) provided a simpler example with these properties, which we sketch in \Sr{subtree2}. 

A related result of Thomassen states that if a graph satisfies a certain isoperimetric inequality, then it must have a transient subtree \cite{ThoIso}.

\medskip
The starting point for this paper was the following problem of Benjamini and  Schramm


\begin{con}[{\cite[1.11.~Conjecture]{BeSchrHar}}] \label{bsconj}
Let $M$  be a connected, transient, Gromov-hyperbolic, Riemannian manifold with bounded local geometry, with the property that the union of
all bi-infinite geodesics meets every ball of sufficiently large radius. Then $M$  admits non constant bounded harmonic functions. Similarly, a Gromov-hyperbolic
bounded valence, transient graph, with $C$-dense bi-infinite geodesics has non constant bounded harmonic functions.
\end{con}

We remark that in order to disprove ---the second sentence of--- this, it suffices to find a transient, Gromov-hyperbolic
bounded valence (aka. degree)  graph with the Liouville property
; for given such a graph $G$, one can attach a disjoint 1-way infinite path to each vertex of $G$, to obtain a graph having 1-dense bi-infinite geodesics while preserving all other properties. 
As pointed out by I.~Benjamini (private communication), it is not hard to prove that any `lattice' in a horoball in 4-dimensional hyperbolic space has these properties. We prove that our example also  has these properties, thus providing a further counterexample to \Cnr{bsconj}. A perhaps surpising aspect of our example is that all of its geodesics eventually coincide despite its transience; see \Sr{secConstr}.

Although we do not formally provide a counterexample to the first sentence of  \Cnr{bsconj}, we believe it is easy to obtain one by blowing up the edges of our graph into tubes. 

In \Sr{sketch} we provide a sketch of this construction, from which the expert reader might be able to deduce the details.

\comm{
\subsection{old}

In this paper we construct a bounded-degree (Gromov-)hyperbolic graph with trivial hyperbolic boundary, on which \rw\ is transient. The surprise here is the combination of transience and having a trivial hyperbolic boundary.

In particular, although all infinite geodesics of such a graph must have bounded pairwise distance, and in fact all of them eventually coincide in our graph, this does not imply that there is an infinite sequence of bounded vertex sets separating an arbitrarily large vertex set from infinity.

In addition, we show that our construction admits no non-constant bounded harmonic functions
}

\section{The hyperbolic Souvlaki} \label{secConstr}

In this section we construct a  bounded-degree graph $\Psi$ with the following properties
\begin{enumerate}
	\item it is hyperbolic, and its hyperbolic boundary consists of a single point;
	\item for every vertex $x$ of $\Psi$, there is a unique infinite geodesic starting at $x$, and any two 1-way infinite geodesics of $\Psi$ eventually coincide;
	\item it is transient;
	\item every subtree of $\Psi$ is recurrent;
	\item it has the Liouville property.
\end{enumerate}

This graph thus yields a counterexample to \cite[Open~Problem~1.62]{BenCoa} and \Cnr{bsconj} as mentioned in the Introduction.

\subsection{Sketch of construction} \label{sketch}

Let us sketch the construction of this graph $\Psi$, and outline the reasons why it has the above properties. It consists of an 1-way infinite path $S=s_0 s_1 \ldots\,$, on which we glue a sequence $M_i$ of finite increasing subgraphs of an infinite `3-dimensional' hyperbolic graph $H_3$. For example, $H_3$ could be the 1-skeleton of a regular tiling of 3-dimensional hyperbolic space, and the $M_i$ could be taken to be  copies of balls of increasing radii around some origin in $H_3$, although it was more convenient for our proofs to construct  different $H_3$ and $M_i$. 

In order to glue $M_i$ on $S$, we identify the subpath $s_{2^i} \ldots s_{2^{i+2}-1}$ with a geodesic of the same length in $M_i$. Thus $M_i$ intersects $M_{i-1}$ and $M_{i+1}$ but no other $M_j$, and this intersection is a subpath of $S$; see Figure \ref{Fig:meatballs}. (Our graph can be quasi-isometrically embedded in $\mathbb H^5$, but probably not in $\mathbb H^4$.) We call this graph a \defi{hyperbolic souvlaki}, with \defi{skewer} $S$ and \defi{meatballs} $M_i$. We detail its construction in \Sr{secConstr}.

To prove that this graph is transient, we construct a flow of finite energy from $s_0$ to infinity (\Sr{secTrans}). This flow carries a current of strength $2^{-i}$ inside $M_i$ out of each vertex in $s_{2^i} \ldots s_{2^{i+1}-1}$, and distributes it evenly to the vertices in $s_{2^{i+1}} \ldots s_{2^{i+2}}$  for every $i$. These currents can be thought of as flowing on spheres of varying radii inside $M_i$, avoiding each other, and it was important to have at least three dimensions for this to be possible while keeping the energy dissipated under control.

To prove that our graph has the Liouville property, we observe that \rw\ has to visit $S$ infinitely often, and has enough time to `mix' inside the $M_i$ between subsequent visits to $S$ (\Sr{secLiou}).

\subsection{Formal construction}

We now explain our precise construction, which is similar but not identical to the above sketch.
We start by constructing a hyperbolic graph $H_3$ which we will use as a model for the `meatballs' $M_i$; more precisely, the $M_i$ will be chosen to be increasing subgraphs of $H_3$.

Let $T_3$ denote the infinite tree with one vertex $r$, which we call the \defi{root}, of degree 3 and all other vertices of degree 4. For $n=1,2,\ldots,$ we put a cycle ---of length $3^n$--- on the vertices of $T_3$ that are at distance $n$ from $r$ in such a way that the resulting graph is planar\footnote{Formally, we pick a cyclic ordering on the neighbours of $r$ and a linear ordering on the outer neighbours of every other vertex of $T_3$. 
Given a cyclic ordering on the vertices at level $n$ of $T_3$, we get a cyclic ordering at level $n+1$ by replacing each vertex by the linear ordering on its outer neighbours. Now we add edges between any two vertices that are adjacent in any of these cyclic orderings.}; see Figure \ref{Fig:H2}. 
We denote this graph by $H_2$. It is not hard to see that $H_2$ is hyperbolic.

\begin{figure}[h!]
\begin{center}
\includegraphics[scale=0.8]{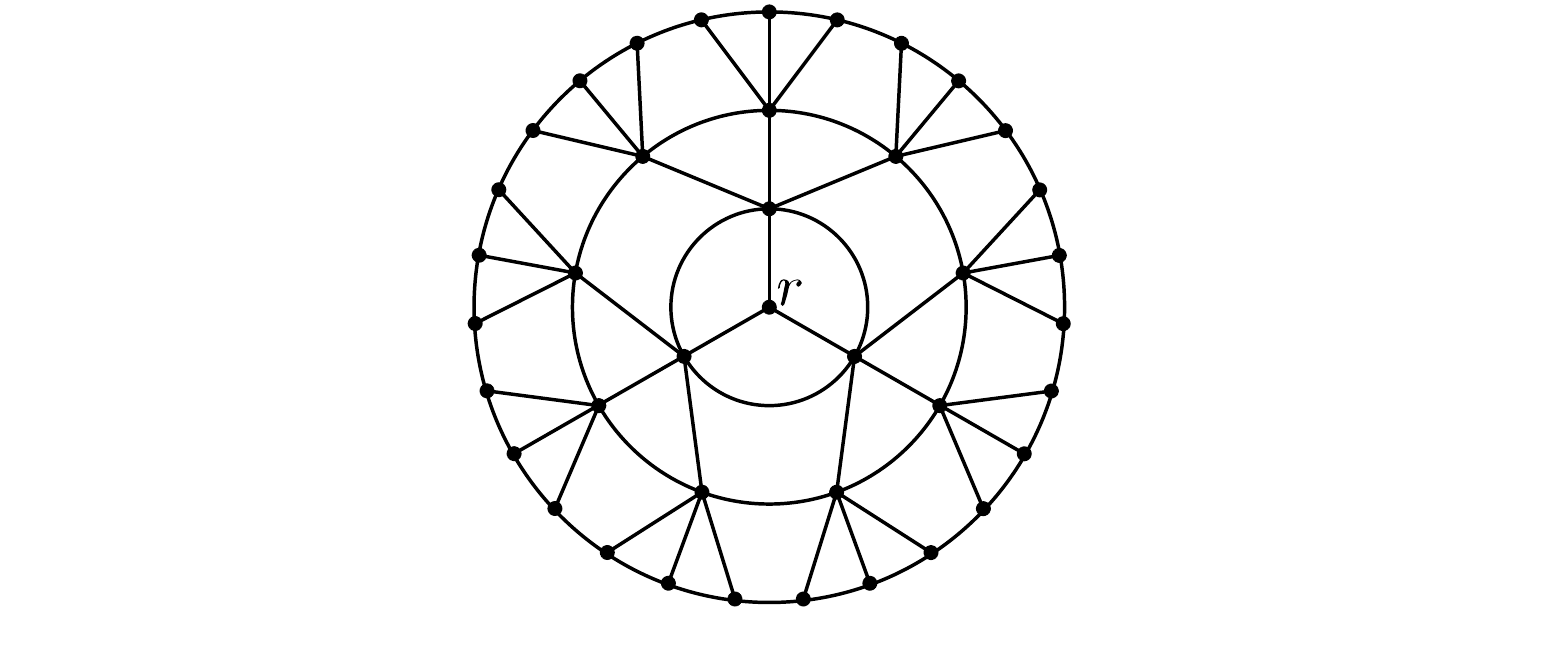}
\caption{The ball of radius 3 around the root of $H_2$.}\label{Fig:H2}
\end{center}
\end{figure}

Recall that a \emph{ray} is a 1-way infinite path.
We will now turn $H_2$ into a `3-dimensional' hyperbolic graph $H_3$, in such a way that each ray inside $T_3$ (or $H_2$) starting at $r$ gives rise to a subgraph of $H_3$ isomorphic to the graph $W$ of Figure \ref{Fig:ww}, which is a subgraph of the Cayley graph of the Baumslag-Solitar group $BS(1,2)$. Formally, we construct $W$ from infinitely many vertex disjoint double rays\footnote{A \emph{double ray} is a 2-way infinite path.} $D_0, D_1, D_2,..$, where $D_i=...r_i^{-2}r_i^{-1}r_i^0r_i^1r_i^2...$.
Then we add all edges of the form $r_i^{k}r_{i+1}^{2k}$. 

\begin{figure}[h!] 
\begin{center}
\includegraphics[scale=0.2]{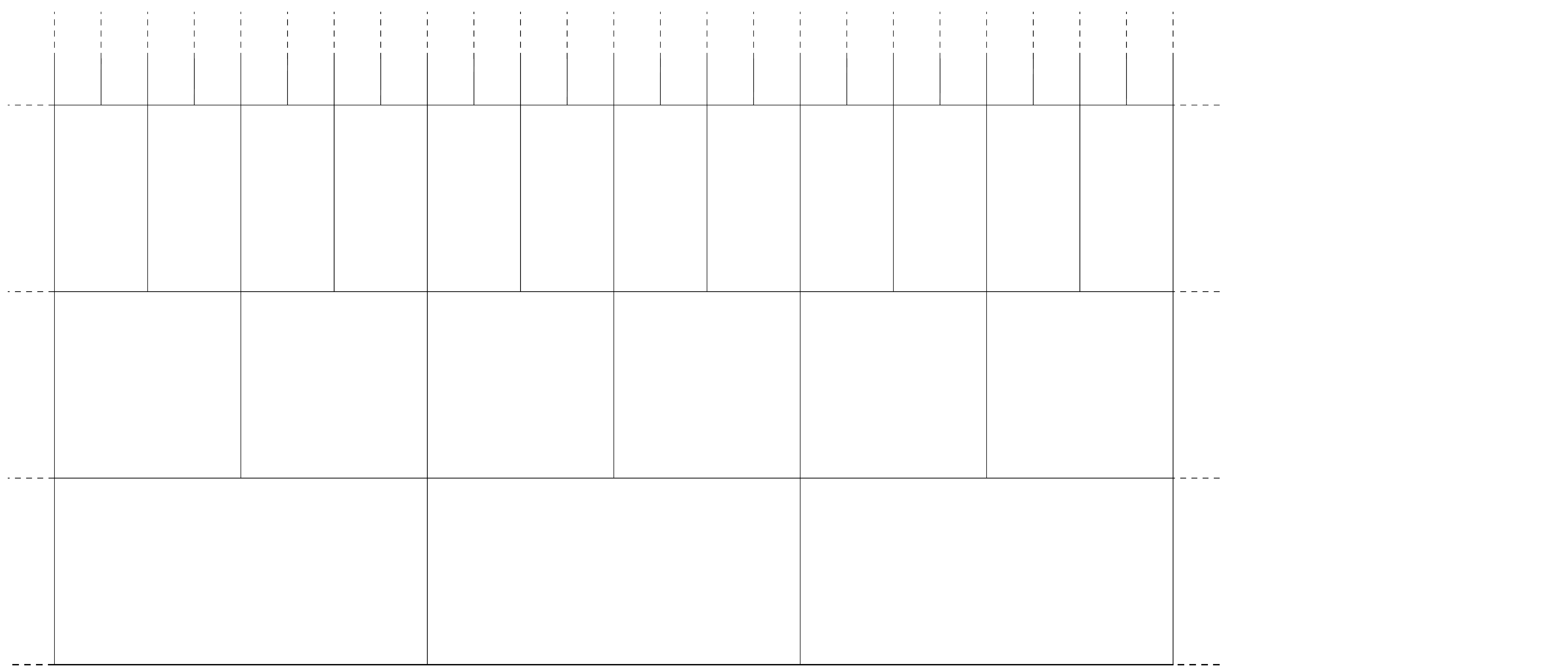}
\caption{The graph $W$: a subgraph of the standard  Cayley graph of the Baumslag-Solitar group $BS(1,2)$. It is a plane hyperbolic graph.}\label{Fig:ww}
\end{center}
\end{figure}

To define $H_3$, we let the  \defi{height} $h(t)$ of a vertex $t\in V(H_2)$ be its distance $d(r,t)$ from the root $r$. For a vertex $w$ of $W$, we say that its height $h(w)$ is $n$ if $w$ lies in $D_n$, the $n$th horizontal double ray in \fig{Fig:ww}. 

We define the vertex set of $H_3$ to consist of all ordered pairs $(t,w)$ where $t$ is a vertex of $H_2$ and $w$ is a vertex of $W$ and $h(w)= h(t)$. The edge set of $H_3$ consists of all pairs of pairs $(t,w) (t',w')$ \st\ either 

\begin{itemize}
\item $tt'\in E(H_2)$ and $ww'\in E(W)$, or 
\item $tt'\in E(H_2)$ and $w=w'$, or 
\item $t=t'$ and $ww'\in E(W)$.
\end{itemize}

\begin{figure}[h!]
\begin{center}
\includegraphics[scale=0.2]{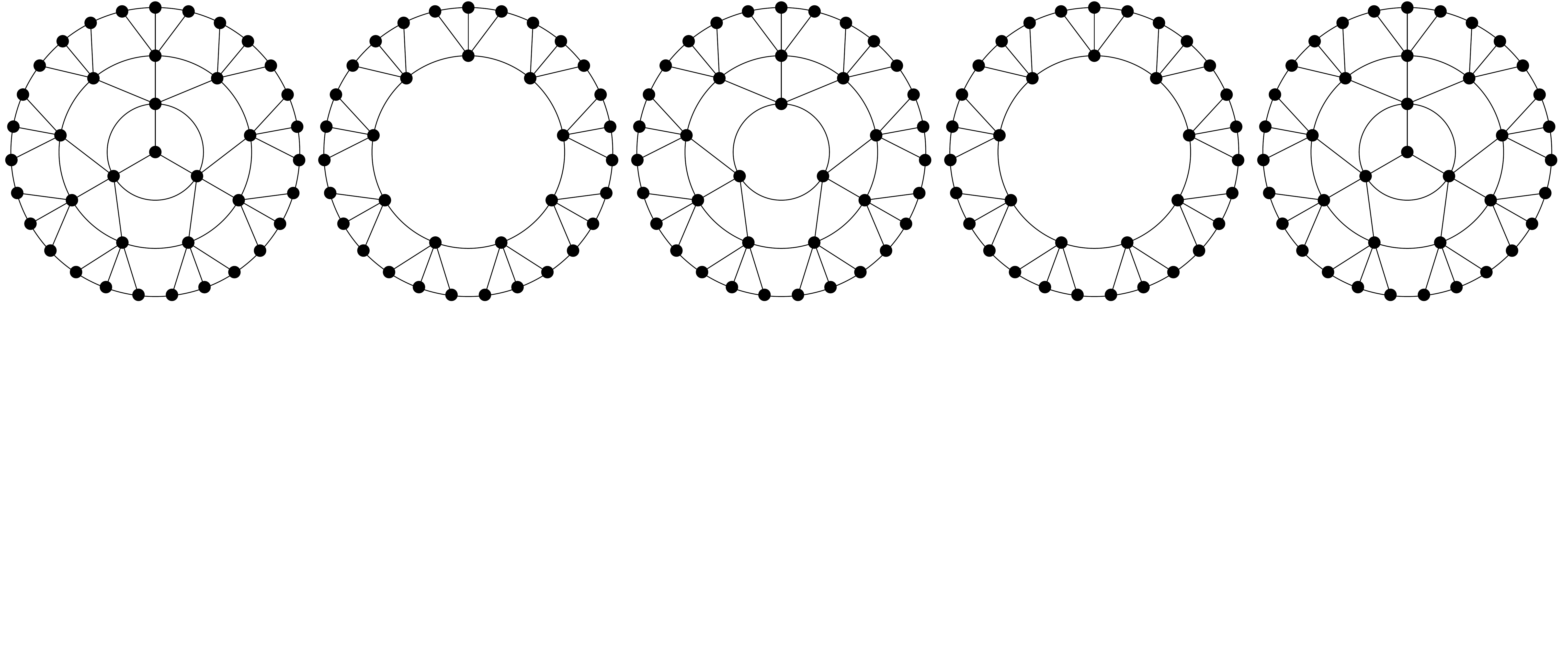}
\caption{A subgraph of $H_3$. Edges of the form $(t,w)(t',w')$ with $t=t'$ and $ww'\in E(W)$ are missing from the figure: these are all the edges joining corresponding vertices in consecutive components of the figure.}\label{Fig:onemeatball}
\end{center}
\end{figure}

Thus every vertex $t$ of $H_2$ gives rise to a double ray  in $H_3$, which consists of those vertices of $H_3$ that have $t$ as their first coordinate. Similarly, every vertex $w$ of $W$ gives rise to a cycle in $H_3$, the length of which depends on $h(w)$. We call the vertices on any such cycle \defi{cocircular}. Every ray of $T_2$ starting at $r$ gives rise to a copy of $W$, and if two such paths share their first $k$ vertices, then the corresponding copies of $W$ share their first $k$ levels of $h$. It is not hard to prove that $H_3$ is a hyperbolic graph, but we will omit the proof as we will not use this fact.




\medskip
We next construct $\Psi$ by glueing a sequence of finite subgraphs $M_n$  of $H_3$ along a ray   $S$. We could choose the subgraph $M_n$ to be a ball in $H_3$, but we found it more convenient to work with somewhat different subgraphs of $H_3$: we let $M_n$ be the finite subgraph of $H_3$ spanned by those vertices $(t,w)$ such that $w$ lies in a certain \defi{box} $B_n \subseteq W$ of $W$ defined as follows.
Consider a subpath $P_n$ of the bottom double-ray of $W$ of length $3\cdot2^n$, and let $B_n$ consist of those vertices $w$ that lie in or above $P_n$ (as drawn in \fig{Fig:ww}) and satisfy $h(w) \leq n$. 

This completes the definition of $M_n$. We let $S_n$ denote the vertices of $M_n$ corresponding to $P_n$, and we index the vertices of $S_n$ as $\{r(x),\,0\leq x\leq 3\cdot2^n\}$. Note that $S_n$ is a geodesic of $M_n$. We subdivide $S_n$ into three parts: $L_n:=\{r(x),0\leq x<2^n\}, m_n:=r(2^n)$ and $R_n:=\{r(x),2^n<x\leq3\cdot2^n\}$.
We define the \defi{ceiling} $F_n$ of $M_n$ to be its vertices of maximum height, i.e.\ the vertices $(t,w)\in V(M_n)$ with $h(w)=n$.


Finally, it remains to describe how to glue the $M_n$ together to form $\Psi$.
We start with a ray $S$, the first vertex of which we denote by $o$ and call the \defi{root} of $\Psi$. We glue $M_1$ on $S$ by identifying $S_1$ with the initial subpath of $S$ of length $|S_1|$. Then, for $n=2,3,\ldots$, we glue $M_n$ on $S$ in such a way that $L_n$ is identified with $R_{n-1}$ (where we used the fact that $|L_n|=|R_{n-1}|= 2^n$ by construction), $m_n$ is identified with the following vertex of $S$, and $R_n$ is identified with the subpath of $S$ following that vertex and having length $|R_n|=2^{n+1}$. 
Of course, we perform this identification in such a way that the linear orderings of $L_n$ and $R_n$ are given by the induced linear ordering of $S$. 
We let $\Psi$ denote the resulting graph. We think of $M_n$ as a subgraph of $\Psi$.

\subsection{Properties of $\Psi$}

By construction, for $j>i$ we have $M_i \cap M_j=\emptyset$ unless $j=i+1$, in which case $M_i \cap M_j= R_i = L_j \subset S$. The following fact is easy to see.
\labtequ{sepinf}{For every $n$, $R_n$ separates $L_n$ (and $o$) from infinity.}


The following property will be important for the proof of Liouvilleness.

\labtequ{roof}{There is a uniform lower bound $p>0$ for the probability $\PrII{v}{\tau_{F_n}<\tau_{S_n}}$ that \rw\ from any vertex of $L_n$ will visit the ceiling $F_n$ before returning to $S_n$.}

Indeed, we can let $p$ be the probability for \rw\ on $H_2$ starting at the root $o$ to never visit $o$ again; this is positive because $H_2$ is transient. Then \eqref{roof} holds because in a \rw\ from $S_n$ on $M_n$, any steps inside the copies of $H_2$ behave like \rw\ on $H_2$ until hitting $F_n$, and the steps `parallel' to $S_n$ do not have any influence.

\section{Hyperbolicity}

In this section we prove that $\Psi$ is hyperbolic in the sense of Gromov \cite{gromov}.

\begin{lem}
The graph $\Psi$ is hyperbolic, and has a one-point hyperbolic boundary.
\end{lem}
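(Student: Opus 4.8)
The plan is to show that $\Psi$ is, up to bounded error, a ``thickened ray'' in which all geodesics bend downwards towards the skewer $S$, and that this bending is geometric (exponential) — strong enough to force both uniformly thin triangles and a single boundary point.

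First I would set up the coarse geometry relative to $S$. Every edge of $\Psi$ changes the height $h$ by at most one, and $S$ is exactly the set of vertices of height $0$; hence $d(v,S)\ge h(v)$. Conversely, from a vertex $v=(t,w)\in M_n$ one can descend to $S$ by simultaneously moving $w$ downwards in $W$ and $t$ towards the root $r$ of $H_2$, one level at a time, landing on $S_n\subseteq S$, so $d(v,S)\le 2h(v)\le 2n$. The quantitative heart of the matter is that descending one level roughly halves the horizontal ($W$-)displacement between two vertices and moves one step closer to $r$ in the tree-like $H_2$-direction; both $W$ and $H_2$ are hyperbolic graphs that contract geometrically towards their bottom/root. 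I would package this as a ``$U$-shape'' description of geodesics: a geodesic between $u,v$ in a common $M_n$ first descends to the lowest level at which the descendants of $u$ and $v$ lie within $O(1)$ of one another, then moves $O(1)$ horizontally, then ascends. In particular, since no level lies below $S$ and ascending strictly increases horizontal displacement, $S$ admits no shortcut and is itself a geodesic.

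Second, for hyperbolicity I would prove that geodesic triangles are $\delta$-thin with a \emph{constant} $\delta$. The $U$-shape description says that a geodesic rises away from $S$ only near its two endpoints, and that the portion of any geodesic at height $\ge k$ has $S$-shadow of diameter $O(1)$, by the geometric contraction. Hence, for a triangle $xyz$, each ascending part of a side is matched, up to bounded distance, by the ascending part of the adjacent side sharing the same endpoint, while the remaining parts lie within bounded distance of $S$, on which (being a ray, hence a tree) triangles are degenerate. Making this uniform amounts to verifying the Bowditch/guessing-geodesics axioms for the family of $U$-shaped paths — that they are uniformly close to genuine geodesics and form thin triangles — and I would carry out that verification, the geometric contraction being precisely what keeps $\delta$ constant as $n\to\infty$.

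Finally, for the one-point boundary I would argue that every geodesic ray is eventually contained in a bounded neighbourhood of $S$. Indeed, consecutive meatballs satisfy $M_n\cap M_{n+1}=R_n\subseteq S$, and by \eqref{sepinf} each $R_n$ separates $o$ from infinity, so any ray to infinity must meet every $R_n\subseteq S$; by the $U$-shape description it cannot make unbounded excursions away from $S$ between consecutive crossings without failing to be geodesic. Since $S$ has a unique direction to infinity, all geodesic rays are mutually asymptotic (in fact they eventually merge onto $S$), so the Gromov boundary of $\Psi$ is a single point. The main obstacle I anticipate is the second step: extracting a constant hyperbolicity constant from meatballs whose diameters and heights grow without bound, which requires establishing that the exponential contraction towards $S$ holds simultaneously in the $W$-level direction and in the $H_2$-tree direction of the coupled graph $H_3$, and that it dominates the vertical excursions of geodesics uniformly in $n$.
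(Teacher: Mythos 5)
Your proposal is correct in outline, but it takes a genuinely different route from the paper. The paper's proof is far terser: it observes that from every vertex there is a \emph{unique} infinite geodesic (step towards the root of the relevant copy of $H_2$ whenever such an edge exists, otherwise step along $S$ towards infinity), invokes the characterisation of hyperbolicity via bounded-distance-or-exponential-divergence of geodesics, declares that condition ``trivially satisfied'' by the uniqueness claim, and gets the one-point boundary from the fact that all infinite geodesics eventually coincide with $S$. You instead derive a U-shaped normal form for arbitrary finite geodesics from the geometric contraction of horizontal displacement under descent, and aim at uniformly thin triangles via a guessing-geodesics criterion; your treatment of the boundary (every ray must cross every $R_n\subseteq S$ and cannot make long excursions between crossings) reaches the same conclusion by a slightly longer path. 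What your approach buys is robustness: the paper's uniqueness claim concerns only infinite geodesics, and upgrading it to the divergence condition for arbitrary pairs of geodesics requires essentially the quantitative contraction estimates you isolate, so your route makes explicit what the paper's one-line appeal to the criterion sweeps under the rug; what the paper's approach buys is brevity and a cleaner boundary argument. Two cautions on your version: the claim that ``the portion of any geodesic at height $\ge k$ has $S$-shadow of diameter $O(1)$'' is only true for each ascending arm separately (the two arms of a U can have far-apart shadows), which is what your triangle argument actually needs; and the uniformity in $n$ that you flag as the main obstacle --- simultaneous contraction by a factor of $2$ per level in the $W$-direction and $3$ per level in the $H_2$-direction, together with uniform quasiconvexity of the boxes $M_n$ inside $H_3$ --- is indeed the real content, and it is exactly the part that the paper's proof also leaves unverified, so neither argument is complete without it.
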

\begin{proof}

We claim that for every vertex $x\in V(\Psi)$, there is a unique 1-way infinite geodesic starting at $x$. Indeed, this geodesic $x_0 x_1 \ldots$, takes a step from $x_i$ towards the root of $T_3$ inside the copy of $H_2$ corresponding $x_i$ whenever such an edge exists in $\Psi$, and it takes a horizontal step in the direction of infinity whenever such an edge does not exist.


The hyperbolicity of $\Psi$ now follows from a well-known fact saying that a space is hyperbolic \iff\ any two geodesics with a common starting point are either at bounded distance or diverge exponentially in a certain sense; see \cite{short} for details. We skip the details here as in our case the condition is trivially satisfied do to the above claim.

\medskip
As all infinite geodesics eventually coincide with $S$, we also immediately have that the hyperbolic boundary of $G$ consists of just one point.
\end{proof}

\section{Transience} \label{secTrans}

In this section we prove that $\Psi$ is transient. We do so by displaying a flow from $o$ to infinity having finite Dirichlet energy; transience then follows from Lyons' criterion:

\begin{thm}[{T.~Lyons' criterion (see \cite{LyoSim} or \cite{LyonsBook})}] \label{lyoCr}
A graph $G$ is transient, \iff\ $G$ admits a flow of finite energy from a vertex to infinity.
\end{thm}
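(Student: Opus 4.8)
The plan is to interpret $G$ as an electrical network with unit resistances and to identify transience with the finiteness of the effective resistance from a fixed vertex $o$ to infinity. Fix an exhaustion of $G$ by the balls $B_n$ around $o$, and let $G_n$ be the finite network obtained from $G$ by contracting everything outside $B_n$ to a single vertex $z_n$. Writing $R_n$ for the effective resistance between $o$ and $z_n$ in $G_n$, Rayleigh monotonicity (less shorting gives more resistance) shows that $(R_n)$ is nondecreasing, so $R_\infty := \lim_n R_n$ exists in $(0,\infty]$. The first step is to recall the probabilistic meaning of this quantity: the escape probability of \rw\ started at $o$ equals $1/(\pi(o)\,R_\infty)$, where $\pi(o)$ is the degree of $o$. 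Consequently $G$ is transient \iff\ $R_\infty<\infty$, and it remains to match $R_\infty$ with energies of flows.

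For the easy direction, suppose $\theta$ is a flow of finite energy $\mathcal E(\theta)$ from $o$ to infinity; after normalising we may take its strength to be $1$. Restricting $\theta$ to $G_n$, by routing the flow on all edges leaving $B_n$ into the contracted vertex $z_n$, yields a unit flow from $o$ to $z_n$ whose energy is at most $\mathcal E(\theta)$. By Thomson's principle, which characterises $R_n$ as the minimum energy among unit flows from $o$ to $z_n$, we obtain $R_n\le\mathcal E(\theta)$ \fe\ $n$, hence $R_\infty\le\mathcal E(\theta)<\infty$ and $G$ is transient.

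For the converse, assume $G$ is transient, so $R_\infty<\infty$. Let $i_n$ be the \emph{unit current flow} from $o$ to $z_n$ in $G_n$, i.e. the minimiser in Thomson's principle; its energy equals $R_n$, so the energies $\mathcal E(i_n)=R_n\le R_\infty$ are uniformly bounded. A uniform energy bound forces a uniform bound on $|i_n(e)|$ on each edge $e$, so a diagonal argument extracts a subsequence of $(i_n)$ converging pointwise on $E(G)$ to an edge function $\theta$. Now \knl\ holds at each vertex $v\ne o$ for every $i_n$ with $n$ large enough that $v$ and its incident edges are undisturbed by the contraction, and, being a finite linear condition at $v$, it passes to the pointwise limit; likewise the net flow out of $o$ remains equal to $1$ because only finitely many edges meet $o$. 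Thus $\theta$ is a unit flow from $o$ to infinity, and by Fatou's lemma $\mathcal E(\theta)\le\liminf_n\mathcal E(i_n)=R_\infty<\infty$, as required.

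The main obstacle is the passage to the limit in the converse direction: one must ensure that $\theta$ is genuinely nonzero and carries the full unit strength, rather than leaking its current out to infinity along the subsequence. The clean way to secure this is to work in the Hilbert space of finite-energy edge-functions equipped with the energy inner product: there the $i_n$ are orthogonal projections onto the nested orthocomplements of the cycle spaces of the $G_n$, the increments $i_{n+1}-i_n$ are mutually energy-orthogonal, and $\sum_n\|i_{n+1}-i_n\|^2\le R_\infty<\infty$ makes $(i_n)$ Cauchy in energy norm. Its limit $\theta$ is then a bona fide unit flow to infinity with $\mathcal E(\theta)=R_\infty$, which upgrades the Fatou estimate to an equality and removes any danger of degeneration. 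With both directions in hand, \Tr{lyoCr} follows.
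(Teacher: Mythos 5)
The paper does not prove this statement: it is quoted as a known result of T.~Lyons with references to \cite{LyoSim} and \cite{LyonsBook}, so there is no in-paper proof to compare against. Your argument is correct and is essentially the standard proof from the cited sources (Thomson's principle plus Rayleigh monotonicity for the easy direction, and a limit of unit current flows with Fatou's lemma, or equivalently the Hilbert-space/orthogonal-projection argument, for the converse). One small remark: the ``leaking'' worry you raise in the last paragraph is not actually a gap for this direction, since a flow from $o$ to infinity only needs to satisfy the node law at vertices other than $o$ and to have unit net outflow at $o$, both of which are finite linear conditions preserved under the pointwise limit you already constructed; the Hilbert-space refinement is only needed if one wants the sharper conclusion $\mathcal E(\theta)=R_\infty$.
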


We refer the reader to \cite{LyonsBook} or \cite{UKtrans} for the basics of electrical networks needed to understand this theorem.

\begin{figure}[h!]
\begin{center}
\includegraphics[scale=0.6]{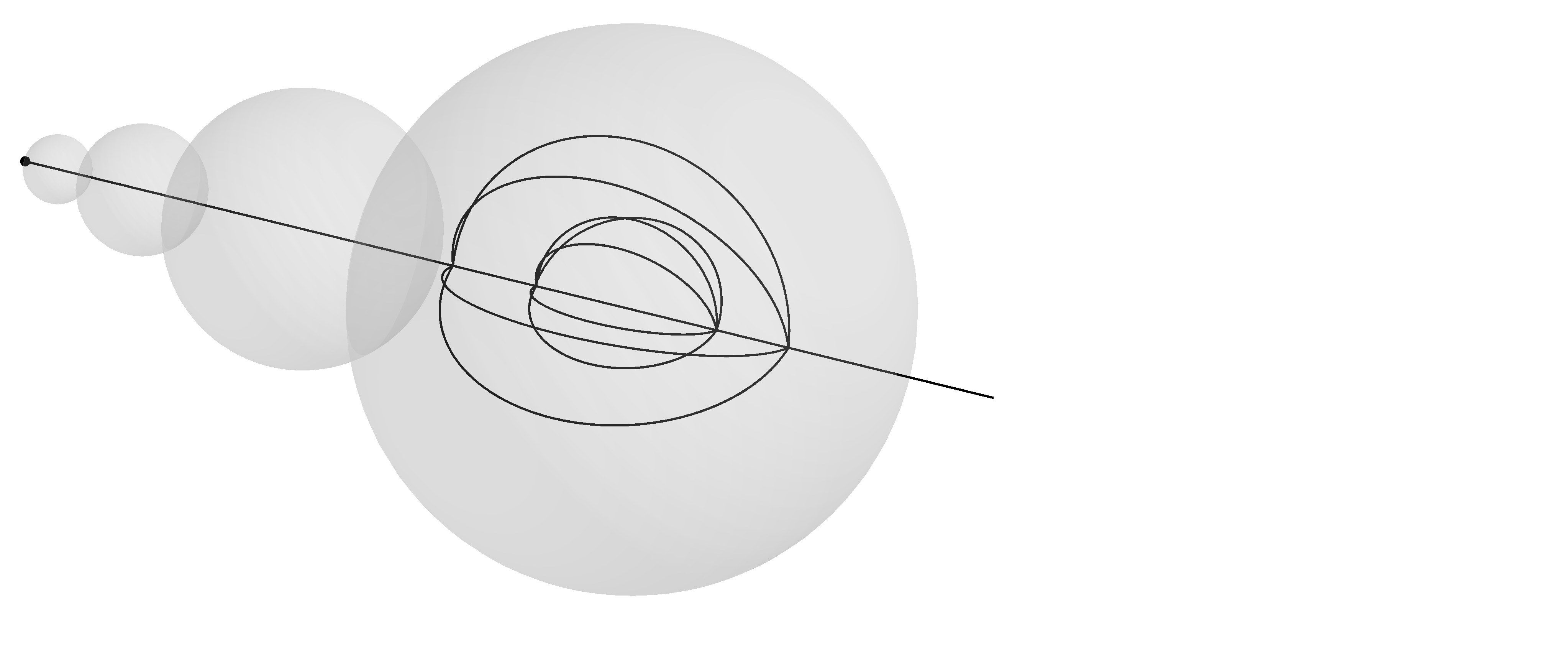}
\caption{The structure of the graph $\Psi$, with the `balls' intersecting along the ray and the flow inside the ball.}\label{Fig:meatballs}
\end{center}
\end{figure}

To construct this flow $f$, we start with the flow $t$ on the tree $T_3\subset H_2$ which sends the amount $3^{-n}$ through each 
directed edge of $T_3$ from a vertex of distance $n-1$ from the root to a vertex of distance $n$ from the root. Note that $t$ has finite Dirichlet energy.


Our flow $f$ will be as described in the introduction, that is, it is composed of flows $g(n)$ in $M_n$.
These flows flow from $L_n$ to $R_n$.
The flow $g(n)$ in turn is composed of \emph{`atomic'} flows, one for each $v\in L_n$.
Roughly, these atomic flows imitate $t$ from above for some levels, then use the edges parallel to $S_n$ to bring it `above' $R_n$, and then collect it back to (two vertices of) $S_n$ imitating $t$ in the inverse direction. A key idea here is that although the energy dissipated along the long paths parallel to $S_n$ is proportional to their length, by going up enough levels with the $t$-part of these flows, we can ensure that the flow $i$ carried by each such path is very small compared to its length $\ell$.
Thus its contribution $i^2 \ell$ to the Dirichlet energy can be controlled: although going up one level doubles $\ell$, and triples the number of long paths we have, each of them now carries $1/3$ of the flow, and so its contribution to the energy is multiplied by a factor of $1/9$. Thus all in all, we save a factor of $6/9$ by going up one 
more level -- and we have made the $M_i$ high enough that we can go up enough levels.

\medskip
We now describe $g(n)$ precisely. 
For every $n\in \N$, let us first enumerate the vertices of $L_n$ as $l^j= l_n^j$, with $j$ ranging from 1 to $|L_n|=2^n$, in the order they appear on $S_n$ as we move from the midpoint $m_n$ towards the root $o$. Likewise, we enumerate the vertices of $R_n$ as $r^j= r_n^j$, with $j$ ranging from 1 to $|R_n|= 2|L_n|$, in the order they appear on $S_n$ as we move from the midpoint $m_n$ towards infinity. Thus $r^1, l^1$ are the two neighbours of $m_n$ on $S$. We will let $g(n)$ be the union of $|L_n|$ subflows $g^j= g_n^j$, where $g^j$ flows from $l^j$ into $r^{2j}$ and $r^{2j-1}$. More precisely, $g^j$ sends $1/|L_n|=2^{-n}$ units of current out of $l^j$, and half as many units of current into each of $r^{2j}$ and $r^{2j-1}$.

We define $g^j$ as follows. In the copy of $H_2$ containing the source $l^j$ of $g^j$, we multiply the flow $t$ from above by the factor  $2^{-n}$, and truncate it after $j$  layers; we call this the out-part of $g^j$. Then, from each endpoint $x$ of that flow, we send the amount of flow that $x$ receives from $l^j$, which equals $2^{-n} 3^{-j}$, along the horizontal path $P_x$ joining $x$ to the copy $C_1$ of $H_2$ containing $r^{2j-1}$. We let half of that flow continue horizontally to reach the copy $C_2$  of $H_2$ containing $r^{2j}$; call this the middle-part of $g^j$. Finally, inside each of $C_1, C_2$, we put a copy of the out-part of $g^j$ multiplied by $1/2$ and with directions inverted; this is called the in-part of $g^j$. Note that the union of these three parts is a flow of intensity $2^{-n}$ from $l^j$ to $r^{2j}$ and $r^{2j-1}$, each of the latter receiving $2^{-n-1}$ units of current.

Let us calculate the energy $E(g^j)$. The contribution to  $E(g^j)$ by its out-part is bounded above by $2^{-2n}E(t)$ because that part is contained in the flow $2^{-n}t$. Similarly, the contribution of the in-part is half of the contribution of the out-part. The contribution of the middle-part is $3^{j} \cdot (2j+1) 2^{j} \cdot (2^{-n} 3^{-j})^2$: the factor $3^{j}$ counts the number of horizontal paths used by the flow, each of which has length $(2j+1)2^{j} $, and carries  $2^{-n} 3^{-j}$ units of current (except for its last $2^{j}$ edges, from $C_1$ to $C_2$, which carry half as much, but we can afford to be generous). Note that this expression equals $2^{-2n}(2j+1) (6/9)^{j}$, which is upper bounded by $k 2^{-2n}$ for some constant $k$.

Adding up these contributions, we see that $E(g^j)\leq K 2^{-2n}$ for some constant $K$ (which depends on neither $n$ nor $j$).

\medskip
Now let $g(n)$ be the union of the $2^n$ flows $g^j$. 
Note that $g^j, g^i$ are disjoint for $i\neq j$, and therefore the energy $E(g(n))$ of $g$ is just the sum $\sum_{j<2^n} E(g^j)$.
By the above bound, this yields $E(g(n)) \leq K 2^{-n}$.

Now let $f= \bigcup_{n\in \N} g(n)$ be the union of all the flows $g(n)$. Then $g(n), g(m)$ are  disjoint for $n\neq m$, because they are in different $M_i's$. Thus $E(f) = \sum_n E(g(n))\leq K$ is finite. 
Since $g(n)$ removes as much current from each vertex of $L_n$ as $g(n-1)$ inputs, $f$ is a flow from $o$ to infinity. 
Hence $\Psi$ is transient by Lyons' criterion (\Tr{lyoCr}).

\section{Liouville property} \label{secLiou}

In this section we prove that $\Psi$ is \defi{Liouville}, i.e.\ it admits no bounded non-constant harmonic functions.

We remark that a well-known theorem of Ancona \cite{AncNeg} states that in any non-amenable hyperbolic graph the hyperbolic boundary coincides with the Martin boundary. We cannot apply this fact to our case in order to deduce the Liouville property from the fact that our hyperbolic boundary is trivial, because our graph turns out to be amenable.

\medskip
We will use some elementary facts about harmonic functions that can be found e.g.\ in \cite{planarPB}. 

Let $h$ be a bounded non-constant harmonic functions on a graph $G$. We may assume that the range of $h$ is contained in $[0,1]$. Recall that, by the bounded martingale convergence theorem, if $(X_n)_{n \in \N}$ is a simple \rw\ on $G$, then $h(X_n)$ converges almost surely.
We call such a function $h$ \defi{sharp}, if this limit $\lim_n h(X_n)$ is either 0 or 1 almost surely. It is well-known that if a graph admits a bounded non-constant harmonic function, then it admits a sharp  harmonic function, see \cite[Section~4]{planarPB}.

So let us assume from now on that $h: V(\Psi) \to [0,1]$ is a sharp bounded harmonic function on $\Psi$.

We first recall some basic facts from \cite[Section~7]{planarPB}; we repeat some of the proofs for the convenience of the reader.

\begin{lem}\label{hprob}	
	If $h$ is a \shf, then $h(z)=\PrII{z}{\lim h(Z_n) =1}$ \fe\ vertex $z$, where $Z_n$ denotes a \rw\ from $z$.
\end{lem}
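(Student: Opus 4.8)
The plan is to exploit the definition of a \shf\ (sharp harmonic function) directly: since $h$ is sharp, the limit $\lim_n h(Z_n)$ exists almost surely and takes only the values $0$ and $1$. This means the random variable $L := \lim_n h(Z_n)$ is an indicator random variable, so its expectation equals the probability that it equals $1$, i.e.\ $\ExII{z}{L} = \PrII{z}{\lim h(Z_n)=1}$. Thus the whole statement reduces to proving the identity $h(z) = \ExII{z}{\lim_n h(Z_n)}$, which is where the harmonicity and boundedness must be used.

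First I would recall that for a harmonic function $h$, the sequence $(h(Z_n))_{n \in \N}$ is a martingale with respect to the natural filtration of the \rw\ $(Z_n)$ started at $z$; this is exactly the mean-value property that defines harmonicity, since $h(z)$ equals the average of $h$ over the neighbours of $z$, which is $\ExII{z}{h(Z_1)}$, and more generally $\ExII{z}{h(Z_{n+1}) \mid Z_0,\ldots,Z_n} = h(Z_n)$. In particular the martingale property gives $h(z) = \ExII{z}{h(Z_0)} = \ExII{z}{h(Z_n)}$ for every $n$. Next I would pass to the limit: because $h$ takes values in $[0,1]$, the martingale $(h(Z_n))$ is uniformly bounded, so the bounded martingale convergence theorem (already invoked in the excerpt) guarantees both almost-sure convergence of $h(Z_n)$ to $L$ and, crucially, convergence in $L^1$, which lets us exchange limit and expectation to obtain
\[
h(z) = \lim_{n} \ExII{z}{h(Z_n)} = \ExII{z}{\lim_n h(Z_n)} = \ExII{z}{L}.
\]

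Finally I would combine the two observations. Since $h$ is sharp, $L \in \{0,1\}$ almost surely, so $\ExII{z}{L} = \PrII{z}{L=1} = \PrII{z}{\lim_n h(Z_n)=1}$, giving $h(z) = \PrII{z}{\lim h(Z_n)=1}$ as claimed. The argument is short and the individual ingredients are standard; the only point requiring a little care---the main obstacle, such as it is---is justifying the interchange of limit and expectation, which is handled cleanly by the uniform bound $0 \leq h \leq 1$ together with bounded (equivalently, dominated) convergence rather than by mere almost-sure convergence, which alone would not suffice.
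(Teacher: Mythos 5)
Your proof is correct and is precisely the standard argument: the paper itself gives no proof of this lemma, recalling it from the cited reference, and the intended justification is exactly the martingale property of $h(Z_n)$ plus bounded convergence plus the fact that sharpness makes the limit an indicator. Nothing is missing; the interchange of limit and expectation is correctly justified by the uniform bound $0\leq h\leq 1$.
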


\begin{figure}[h!]
\begin{center}
\includegraphics[scale=0.6]{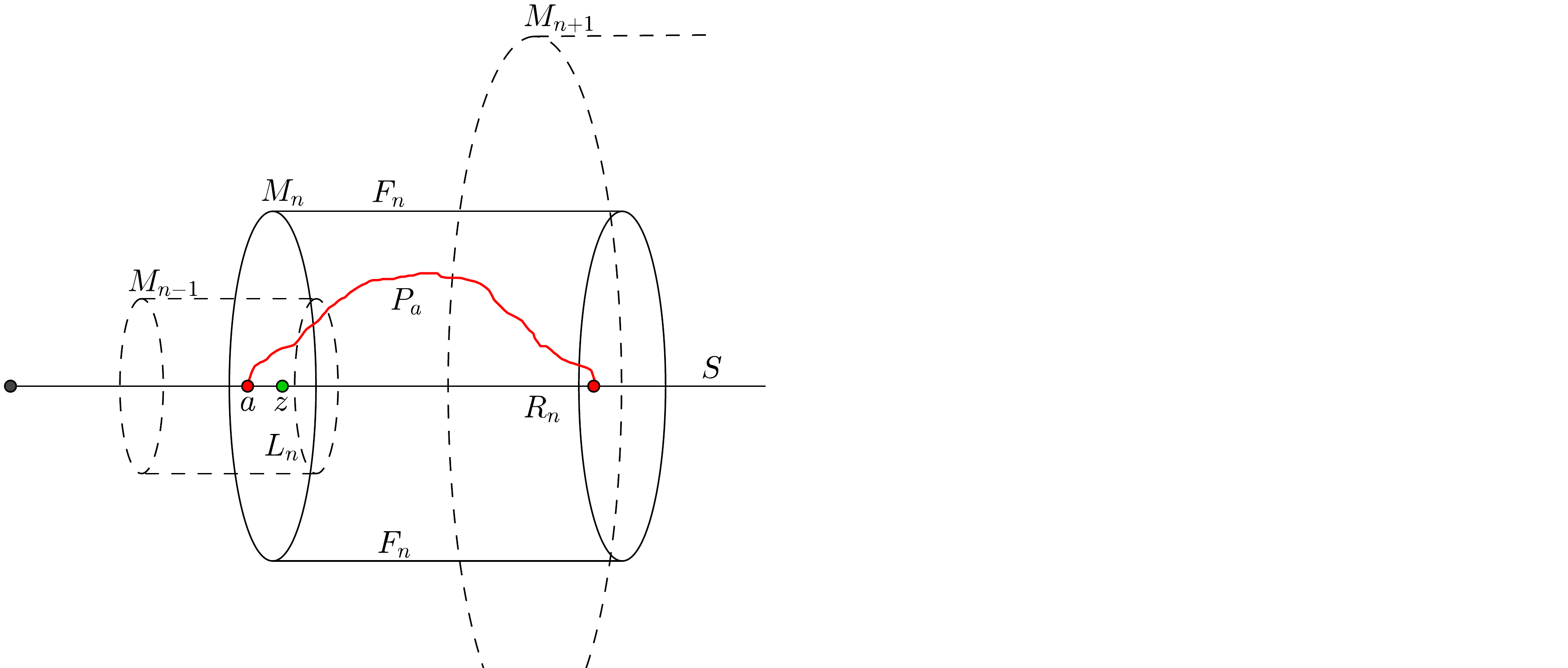}
\caption{The path $P_\alpha$ in the proof of the Liouville property.}\label{Fig:souvlaki}
\end{center}
\end{figure}

\begin{lem}\label{close1}	
	If $h$ is a \shf\ that is not constant, then \fe\ $\eps>0$ there are $a,z\in V$ with $h(a)<\eps$ and $h(z)>1-\eps$.
\end{lem}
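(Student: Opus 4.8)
The plan is to prove \Lr{close1} directly from \Lr{hprob} together with the definition of a sharp harmonic function.

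\medskip

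First I would recall what \Lr{hprob} gives us: for a sharp harmonic function $h$, we have $h(z)=\PrII{z}{\lim h(Z_n)=1}$ for every vertex $z$. Since $h$ is not constant, it cannot be identically $0$ nor identically $1$; indeed, a sharp harmonic function takes values in $[0,1]$, and if it were bounded away from both endpoints it would fail to be non-constant in a way compatible with sharpness. More precisely, the key observation is that the supremum of $h$ over $V$ must equal $1$ and the infimum must equal $0$. To see the supremum claim: by \Lr{hprob}, $h(z)$ equals the probability that the random walk from $z$ converges to the $1$-value of $h$; if $h$ were bounded above by some $c<1$ on all of $V$, then along the random walk $h(Z_n)$ would stay below $c$, contradicting that the limit is $1$ with positive probability (whenever $h(z)>0$ for some $z$, which must happen since $h$ is non-constant and non-negative). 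Symmetrically the infimum is $0$.

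\medskip

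Given that $\sup_{v} h(v)=1$ and $\inf_{v} h(v)=0$, the conclusion is immediate: for any $\eps>0$, the definition of infimum yields a vertex $a$ with $h(a)<\eps$, and the definition of supremum yields a vertex $z$ with $h(z)>1-\eps$. This is exactly the statement to be proved.

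\medskip

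I expect the only real content, and hence the main (though modest) obstacle, to be justifying that $\sup h=1$ and $\inf h=0$ rather than merely $\sup h\le 1$ and $\inf h\ge 0$. The argument above routes this through \Lr{hprob}: sharpness forces the limiting value of $h$ along almost every walk to be exactly $0$ or $1$, so if $h$ stayed uniformly below $1-\del$ then the event $\{\lim h(Z_n)=1\}$ would have probability zero from every starting vertex, giving $h\equiv 0$ by \Lr{hprob} and contradicting non-constancy; the symmetric argument handles the infimum. Once this is in place the rest is just unwinding the definitions of supremum and infimum, with no computation required.
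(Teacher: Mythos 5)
Your argument is correct: combining \Lr{hprob} with sharpness to show that $\sup h=1$ and $\inf h=0$ (since a uniform bound $h\le c<1$ would force $\lim h(Z_n)\le c$ almost surely, hence $\PrII{z}{\lim h(Z_n)=1}=0$ and $h\equiv 0$ by \Lr{hprob}, and symmetrically for the infimum) is exactly the standard route to this statement. The paper itself does not spell out a proof of \Lr{close1}, deferring instead to its reference for these basic facts, so there is nothing to contrast with; your write-up supplies the natural argument and closes the gap the paper leaves to the reader.
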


Let $\ca$ be a shift-invariant event of our \rw, i.e.\ an event not depending on the first $n$ steps \fe\ $n$.
(The only kind of event we will later consider is the event $1^s$ that $s(Z_n)$ converges to 1, where $s$ is our fixed sharp harmonic function.)

For $r\in (0,1/2]$, let 
\labtequstar{
	$A_r:= \{v\in V \mid \PrI{\ca} > 1-r\}$ and\\ 
	$Z_r:= \{v\in V \mid \PrI{\ca} < r\}$.
}
Note that $A_r \cap Z_r=\emptyset$ \fe\ such $r$.

By \Lr{hprob}, if we let $\ca:= 1^s $ then we have $A_r= \{v\in V \mid s(v)> 1-r\}$ and $Z_r= \{v\in V \mid s(v)< r\}$.

\begin{lem}\label{Ade}
	\Fe\ $\eps,\del\in (0,1/2]$, and every $v\in A_\eps$, we have\\  $\PrII{v}{\text{visit $V \sm A_\del$}} < \eps/\del$. Similarly, \fe\ $v\in Z_\eps$, we have\\ $\PrII{v}{\text{visit $V \sm Z_\del$}} < \eps/\del$.
\end{lem}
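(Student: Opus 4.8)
The plan is to recognize the hitting probability of $\ca$, as a function of the starting vertex, as a bounded harmonic function, and then run an optional-stopping (maximal inequality) argument. Write $\phi(w):=\PrII{w}{\ca}$ for $w\in V$. Since $\ca$ is shift-invariant, conditioning on the first step of the walk and applying the Markov property gives $\phi(w)=\sum_{w'}p(w,w')\,\phi(w')$, so $\phi$ is harmonic, and clearly $0\le\phi\le 1$. By definition $A_\del=\{w:\phi(w)>1-\del\}$, and hence $V\sm A_\del=\{w:\phi(w)\le 1-\del\}$.

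Now fix $v\in A_\eps$, so that $1-\phi(v)<\eps$, let $(X_n)$ be the \rw\ from $v$, and let $\tau$ be the first time it visits $V\sm A_\del$ (with $\tau=\infty$ if this never happens). The function $1-\phi$ is harmonic, nonnegative and bounded by $1$, so the stopped process $\big(1-\phi(X_{\tau\wedge n})\big)_n$ is a bounded nonnegative martingale, and optional stopping yields
\[
\ExII{v}{1-\phi(X_{\tau\wedge n})}=1-\phi(v)<\eps .
\]
On the event $\{\tau\le n\}$ we have $X_\tau\in V\sm A_\del$, hence $1-\phi(X_\tau)\ge\del$; discarding the (nonnegative) contribution of $\{\tau>n\}$ therefore gives $\del\cdot\PrII{v}{\tau\le n}\le 1-\phi(v)$. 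Letting $n\to\infty$, the left-hand probabilities increase to $\PrII{v}{\tau<\infty}=\PrII{v}{\text{visit }V\sm A_\del}$, so this probability is at most $(1-\phi(v))/\del<\eps/\del$, which is the first claim.

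For the second claim, apply the same argument to the shift-invariant event $\ca^c$. Since $\PrII{w}{\ca^c}=1-\phi(w)$, the set $Z_r=\{w:\phi(w)<r\}$ is precisely the set ``$A_r$'' associated with $\ca^c$, so the bound on $\PrII{v}{\text{visit }V\sm Z_\del}$ follows verbatim.

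The one genuinely substantive point is the first step, namely that $\phi$ is harmonic; this is exactly where shift-invariance of $\ca$ is used, via the identity $\PrII{w}{\ca\mid X_1=w'}=\PrII{w'}{\ca}$. The remainder is the standard supermartingale maximal inequality, the only bookkeeping being to pass from ``visit by time $n$'' to ``visit ever'' by monotone convergence, and to keep the inequality strict, which is inherited from $1-\phi(v)<\eps$.
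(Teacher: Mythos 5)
Your proof is correct and is essentially the paper's argument: both stop the walk at the first visit to $V\sm A_\del$ and use shift-invariance of $\ca$ (via the strong Markov property) to conclude that $\ca$ fails with probability at least $\del$ on that event, whence $\del\cdot\PrII{v}{\tau<\infty}\le \PrII{v}{\ca^c}<\eps$. Your optional-stopping/martingale phrasing is just a slightly more formal packaging of the paper's direct conditioning step, and the treatment of the $Z_\eps$ case via the complementary event matches the paper exactly.
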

\begin{proof}
	Start a \rw\ $(Z_n)$ at $v$, and consider a stopping time $\tau$ at the first visit to $V \sm A_\del$. If $\tau$ is finite, let $z=Z_\tau$ be the first vertex of random walk outside $A_\del$.  Since $z\not\in A_\del$, 
	the probability that $s(X_n)$ converges to 1 for a random walk $(X_n)$ starting from $z$ is at least $\delta$
by the definition of $A_\del$. Thus, subject to visiting $V \sm A_\del$, the event $ \ca$ fails with probability at least $\del$ since it is a shift-invariant event. But $ \ca$ fails with probability less than $\eps$ because $v\in A_\eps$, and so $\PrII{v}{\text{visit $V \sm A_\del$}} < \eps/\del$ as claimed. 
	
	The second assertion follows by the same arguments applied to the complement of $\ca$.
\end{proof}

\begin{cor}\label{path}
	If a \rw\ from $v\in A_\eps$ (respectively, $v\in Z_\eps$) visits a set $W\subset V$ with probability at least $\kappa$, then \ti\ a ${v}$--${W}$~path all vertices of which lie in $A_{\eps/\kappa}$ (resp.\ $Z_{\eps/\kappa}$).
\end{cor}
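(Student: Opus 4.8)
The plan is to prove the contrapositive-style statement by a union-bound / averaging argument that upgrades a probabilistic reachability statement into a combinatorial one. The key observation is that \Lr{Ade} controls the probability that a walk from a point deep inside $A_\eps$ (resp.\ $Z_\eps$) ever leaves the slightly larger sublevel set $A_\delta$ (resp.\ $Z_\delta$): this escape probability is at most $\eps/\delta$. So the natural idea is to run the walk from $v$ until it first hits $W$, and argue that \emph{with positive probability} it does so while never stepping outside $A_{\eps/\kappa}$. Any single sample path witnessing this event is then precisely the desired $v$--$W$ path contained in $A_{\eps/\kappa}$.

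Concretely, set $\delta:=\eps/\kappa$ and apply \Lr{Ade} with this $\delta$: a walk from $v\in A_\eps$ visits $V\sm A_{\eps/\kappa}$ with probability strictly less than $\eps/\delta=\kappa$. On the other hand, by hypothesis the walk visits $W$ with probability at least $\kappa$. Hence the event ``visit $W$'' has probability $\ge\kappa$ while the event ``ever leave $A_{\eps/\kappa}$'' has probability $<\kappa$, so their difference is strictly positive; in particular there is positive probability that the walk visits $W$ while staying inside $A_{\eps/\kappa}$ for its entire history up to that visit. Since this is a strictly positive probability, at least one sample trajectory realizes it, and the initial segment of that trajectory from $v$ up to its first visit to $W$ is a walk whose vertices all lie in $A_{\eps/\kappa}$; deleting repeated vertices yields a genuine $v$--$W$ path with the same property. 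The argument for $Z_\eps$ is identical, using the second assertion of \Lr{Ade}.

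The one point that needs slight care—and is the main obstacle to stating cleanly rather than anything deep—is making sure the two events are compared correctly so that their overlap is nonempty. We want the event $\{$hit $W\}\cap\{$never leave $A_{\eps/\kappa}$ before hitting $W\}$, not merely the intersection with the global event of never leaving $A_{\eps/\kappa}$. But since leaving $A_{\eps/\kappa}$ at any time (in particular before hitting $W$) is contained in the event $\{$visit $V\sm A_{\eps/\kappa}\}$, which has probability $<\kappa$, we still get $\PrII{v}{\text{hit }W \text{ and never leave } A_{\eps/\kappa} \text{ first}} \ge \PrII{v}{\text{hit }W} - \PrII{v}{\text{visit }V\sm A_{\eps/\kappa}} > \kappa - \kappa = 0$, which is all we need. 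The only genuinely necessary structural fact is that a positive-probability event on the walk must be witnessed by some actual sample path, which is immediate since the walk is supported on paths in the graph.
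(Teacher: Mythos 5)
Your argument is correct and is essentially the paper's own proof: both apply \Lr{Ade} with $\del=\eps/\kappa$ to bound the escape probability from $A_{\eps/\kappa}$ by $\kappa$ (strictly), combine this with the hypothesis that $W$ is hit with probability at least $\kappa$ via the same union-bound, and extract a witnessing trajectory from the resulting positive-probability event. The extra care you take about restricting to the initial segment before the first visit to $W$ is harmless and slightly more explicit than the paper's phrasing, but not a different argument.
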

\begin{proof}
	Apply \Lr{Ade} with $\del= \eps/\kappa$. Then the probability that random walk always stays within $A_{\eps/\kappa}$ is larger than $1-\kappa$. Hence there is a nonzero probability that random walk meets $W$ and along its trace only has vertices from $A_{\eps/\kappa}$. 
\end{proof}

\medskip

Easily, $h$ is uniquely determined by its values on the skewer $S$. Indeed, for every other vertex $v$, note that \rw\ $X_n$ from $v$ visits $S$ almost surely, and so $h(v)= \Ex h(X_{\tau(R)})$, where $\tau(S)$ denotes the first hitting time of $S$ by $X_n$. The same argument implies that 

\labtequ{radial}{$h$ is radially symmetric, i.e.\ for every two \coc\ vertices $v,w$, we have $h(v)=h(w)$.}

Indeed, this follows from the fact that \coc\ vertices have the same hitting distribution to $S$, which is easy to see (for any vertex on a circle, \rw\ has the same probability to move to some other circle).


We claim that, given an arbitrarily small $\eps>0$, all but finitely many of the $L_n$  contain a vertex in $Z_\eps$. 

Indeed if not, then since \rw\ from $o$ has to visit all $L_n$ by transience and \eqref{sepinf} (where we use the fact that $L_n=R_{n-1}$), we would have $\PrI{\lim h(X_n)=1} =0$ for \rw\ $X_n$ from $o$.
But that probability is equal to $h(o)$ by \Lr{hprob}, and if it is zero, then using \Lr{hprob} again easily implies that $h$ is identically zero, contrary to our assumption that it is not constant.

Similarly, all but finitely many of the $L_n$  contain a vertex in $A_\eps$.
Thus we can find a late enough $M_n$ \st\ $L_n$ contains a vertex $a\in A_\eps$ as well as a vertex $z\in Z_\eps$. 
We assume that $a$ and $z$ are the last vertices of $L_n$ (in the ordering of $L_n$ induced by the well-ordering of $S$) that are in $A_\eps$ and $Z_\eps$, respectively. 
Assume without loss of generality that $a$ appears before $z$ in the ordering of $L_n$.

Note that, since $R_n$ separates $a$ from infinity \eqref{sepinf}, \rw\ from $a$ visits $R_n$ almost surely. Thus we can apply \Cr{path} with $W:= R_n$ and $\kappa=1$ to obtain an $a$--$R_n$~path $P_a$ with all its vertices in $A_\eps$. 
We may assume that $P_a \subset M_n$  by taking a subpath contained in $M_n$ if needed. Indeed, $P_a$ can meet $L_n$ only in vertices that are not past $a$ in the linear ordering of $L_n$.

Let $O_a$ denote the set of vertices $\{x=(t,w)\in M_n \mid \text{ there is } (t',w')\in V(P_a) \text{ with } w'=w \}$  obtained by `rotating' $P_a$ around $S$. By \eqref{radial}, we have $O_a\subset A_\eps$ since $P_a\subset A_\eps$. Note that $O_a$ separates $z$ from the ceiling $F_n$ of $M_n$. But as \rw\ from $z\in Z_\eps$ visits $F_n$ before returning to $S$ with probability uniformly bounded below by \eqref{roof}, we obtain a contradiction to  \Lr{Ade} with $\del=1/2$ for $\eps$ small enough compared to that bound.

\section{A transient hyperbolic graph with no transient subtree} \label{subtree}

In this section we explain how our souvlaki construction can be slightly modified so that it does not contain any transient subtrees but remains transient and hyperbolic (and Liouville). This answers a question of I.~Benjamini (private communication). The question is motivated by the fact that it is not too easy to come up with transient graphs that do not have transient subtrees \cite{BeSchrChee}. 

We start with a very fast growing function $f:\mathbb{N}\to \mathbb{N}$, whose precise definition we reveal at the end of the proof. 
Roughly speaking, we will attach a sequence of finite graphs $(M_{f(n)})_{ n\in \mathbb{N}}$ similar to the `meatballs' from above to a ray $S$ (the `skewer') in such a way that most of the intersection of $S$ with a fixed meatball is not contained in any other meatball.
Formally, we let $P_m$ be the `bottom path' of $M_m$ as defined in \Sr{secConstr}, and we  tripartition $P_{f(n)}$  as follows:
Let $L_n$ consist of the first $2^n$ vertices on $P_{f(n)}$, and $R_n$ consist of the last $2^{n+1}$.
The set of the remaining vertices of $P_{f(n)}$ we denote by $Z_n$, which by our choice of $f$ will be much larger than $R_n$. 
As before, we glue the $M_{f(n)}$ on $S$ by identifying $P_n$ with a subpath of $S$. 
We start by glueing $M_{f(1)}$ on the initial segment of $S$ of the appropriate length. 
Then we recursively glue the other $M_{f(n)}$ in such a way that $L_n$ is identified with $R_{n-1}$.
We call the resulting graph $\bar \Psi$.

\begin{thm}\label{trans_tree}
	$\bar \Psi$ is a bounded degree transient gromov-hyperbolic graph that does not contain a transient subtree.
\end{thm}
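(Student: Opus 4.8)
\emph{The routine properties.} Bounded degree is clear, since each $M_{f(n)}$ is a subgraph of the fixed bounded-degree graph $H_3$ and the gluing only identifies the path $L_n$ with $R_{n-1}$. Gromov-hyperbolicity together with a one-point boundary is proved exactly as for $\Psi$: from each vertex there is a unique infinite geodesic, and all of these eventually follow $S$, so any two geodesics issuing from a common vertex coincide after a bounded initial segment. For transience I would re-use the finite-energy flow of \Sr{secTrans}, the only change being that each atomic flow out of a vertex of $L_n$ now climbs almost to the ceiling $F_{f(n)}$ of the (now much taller) meatball, crosses the enlarged gap $Z_n$ near the top --- where the $BS(1,2)$-geometry has contracted the horizontal gap $\asymp 2^{f(n)}$ down to $O(1)$ --- and descends onto $R_n$. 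The climbing parts remain subflows of a rescaled copy of the finite-energy tree-flow $t$, and the crossing at the top is cheap, so the estimate $E(g(n))\le K2^{-n}$ survives unchanged; in particular it does not deteriorate as $f$ grows, so enlarging $f$ does no harm and $\bar\Psi$ is transient by Lyons' criterion (\Tr{lyoCr}).

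\emph{No transient subtree.} The substance is to show that every subtree $T$ of $\bar\Psi$ is recurrent, i.e.\ $\Rcal_{\mathrm{eff}}^{T}(t_0\to\infty)=\infty$ for some (equivalently any) root $t_0\in V(T)$. As in \eqref{sepinf}, each $R_n$ separates $o$ from infinity in $\bar\Psi$, hence in $T$. By Rayleigh's shorting law, identifying each vertex set $R_n\cap V(T)$ to a single node only decreases the effective resistance and turns $T$ into a series of blocks, the $n$-th block being $T\cap M_{f(n)}$ with $R_{n-1}$ and $R_n$ shorted to a source and a sink. Writing $\rho_n$ for the resistance of this block, series addition gives $\Rcal_{\mathrm{eff}}^{T}(t_0\to\infty)\ge\sum_{n}\rho_n$, summed over the $n$ past $t_0$. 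Inside the block, only the connected components of the forest $T\cap M_{f(n)}$ meeting both $R_{n-1}$ and $R_n$ carry current, and these subtrees $C_1,C_2,\ldots$ sit in parallel between source and sink. Fix such a $C_i$ and let $m_i:=|R_n\cap V(C_i)|$. Any path in $\bar\Psi$ from $R_{n-1}$ to $R_n$ stays inside $M_{f(n)}$ (consecutive meatballs meet only along $R_{n-1}=L_n$ and $R_n=L_{n+1}$) and therefore has length at least $D_n:=d_{M_{f(n)}}(L_n,R_n)$. Since $C_i$ is a tree, cutting it by distance from its source and applying the Nash--Williams criterion (see \cite{LyonsBook}) --- the distance-$d$ cut has at most $m_i$ edges, as distinct cut-edges lead to vertex-disjoint subtrees each reaching $R_n$ --- gives $\Rcal_{\mathrm{eff}}(C_i)\ge D_n/m_i$. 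Combining the $C_i$ in parallel, and using $\sum_i m_i\le|R_n|=2^{n+1}$,
\[ \frac{1}{\rho_n}=\sum_i\frac{1}{\Rcal_{\mathrm{eff}}(C_i)}\le\sum_i\frac{m_i}{D_n}\le\frac{2^{n+1}}{D_n},\qquad\text{hence}\qquad \rho_n\ge\frac{D_n}{2^{n+1}}. \]
Therefore $\Rcal_{\mathrm{eff}}^{T}(t_0\to\infty)\ge\sum_n D_n/2^{n+1}$.

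\emph{The geometric input and the choice of $f$.} It remains to bound $D_n$ from below, and this is the step I expect to be the crux. The sets $L_n$ and $R_n$ are segments of the bottom path of $M_{f(n)}$ at horizontal distance $\asymp 2^{f(n)}$, and the hyperbolic ($BS(1,2)$-like) geometry of the meatball forces any path joining them to climb to height $\asymp f(n)$ and back, so that $D_n\ge c\,f(n)$ for a constant $c>0$. Making this rigorous requires reading off from the explicit description of $H_3$ and of the boxes $B_n$ that there is no shortcut through the $H_2$-factor; this is the one genuinely geometric point of the argument. Granting it, $\Rcal_{\mathrm{eff}}^{T}(t_0\to\infty)\ge\sum_n c\,f(n)/2^{n+1}$, which diverges as soon as $f$ grows fast enough that $\sum_n f(n)2^{-n}=\infty$. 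Since $f$ is fixed only at the end, this requirement is imposed together with (and is no stronger than) the conditions already used for transience and for $|Z_n|\gg|R_n|$, and any sufficiently fast-growing $f$ meets all of them simultaneously. Thus every subtree of $\bar\Psi$ is recurrent, which completes the proof.
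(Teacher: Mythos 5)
Your argument splits into the same three tasks as the paper's, but one of them rests on a reversed picture of the geometry, and that is where the genuine gap lies.

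\textbf{Transience.} In $W$ the vertical edges are $r_i^kr_{i+1}^{2k}$, so going \emph{up} one level \emph{doubles} horizontal distances rather than halving them: this is exactly why the paper's energy computation in \Sr{secTrans} has level-$j$ horizontal paths of length $(2j+1)2^j$, and why $S_n$ is a geodesic of $M_n$ (the bottom path admits no shortcut through the upper levels). Your claim that near the ceiling ``the $BS(1,2)$-geometry has contracted the horizontal gap $\asymp 2^{f(n)}$ down to $O(1)$'' is therefore backwards: at the ceiling of $M_{f(n)}$ the gap has been expanded to $\asymp 4^{f(n)}$, and the assertion that $E(g(n))\le K2^{-n}$ ``survives unchanged'' is unsupported. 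The issue is not cosmetic. Taking $\phi(t,w)$ to be the horizontal coordinate of $w$ rescaled to the bottom level, one has $|\Delta_e\phi|\le 2^{-j}$ on level-$j$ horizontal edges and $\Delta_e\phi=0$ on vertical ones, whence $\mathcal{E}(\phi)=O\bigl(2^{f(n)}(3/2)^{f(n)}\bigr)$ while $\phi$ changes by $\asymp 2^{f(n)}$ between $L_n$ and $R_n$; the Dirichlet principle then bounds the effective resistance across $M_{f(n)}$ below by a constant times $(4/3)^{f(n)}$, so \emph{no} unit flow from $L_n$ to $R_n$ inside $M_{f(n)}$ has small energy. Transience of $\bar\Psi$ cannot be obtained by rerouting the old flow near the ceiling; as far as I can see it forces a modification of the construction (meatballs substantially taller relative to their width). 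The paper itself only asserts this step ``by the arguments we used for the original souvlaki'', and the same objection seems to apply there; in the original $\Psi$ the problem does not arise because $L_n$ and $R_n$ are adjacent along $S_n$, so the flow only ever crosses a bottom-level distance $O(j)$.

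\textbf{No transient subtree.} This part is essentially correct and takes a genuinely different route from the paper's. The paper passes to the quotient $Q$ obtained by identifying each $L_n$, observes that each block $Q_n$ minus its two cut-vertices is a forest and hence has boundedly many vertices of degree greater than $2$, and invokes a general lemma: a bounded-degree graph with boundedly many branch vertices and two terminals at large distance has large resistance. You instead apply Nash--Williams distance cuts to each tree component $C_i$ and combine in parallel and in series, getting the cleaner quantitative bound $\rho_n\ge D_n/2^{n+1}$. Both arguments hinge on the same two facts (every $L_n$--$R_n$ path has length at least $|Z_n|$, and the tree structure limits the number of parallel lanes), and yours is tidier. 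Two caveats. First, the cut bound ``at most $m_i$ edges, as distinct cut-edges lead to vertex-disjoint subtrees'' is the single-root argument; when $C_i$ meets $L_n$ in several vertices the distance is measured from a set, and the subtrees beyond two cut-edges at the same distance need not be disjoint, so this step needs more care (the paper's branch-vertex lemma sidesteps it). Second, your geometric input $D_n\ge cf(n)$ is justified by the same reversed picture as above (``forced to climb to height $\asymp f(n)$''); the true and much stronger statement, which the paper uses, is that the bottom path is a geodesic of the meatball, so $D_n=|Z_n|+1\asymp 2^{f(n)}$ and the divergence of $\sum_n D_n2^{-n-1}$ is automatic once, say, $f(n)\ge n$.
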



\begin{proof}
	The hyperbolicity and the transience $\bar \Psi$ can be proved by the arguments we used for the original souvlaki $\Psi$.
	So it remains to show that $\bar \Psi$ does not have a transient subtree. 
	
	Let $T$ be any subtree of $\bar \Psi$. 
	We want to prove that $T$ is not transient.  Easily, we may assume that $T$ does not have any degree 1 vertices. We will show that the
	following quotient $Q$ of $T$ is not transient:
	for each $n$, we identify all vertices in $L_n$ to a new vertex $v_n$.
	
	Note that the vertices $v_n$ and $v_{n+1}$ are cut-vertices of  $Q$; let $Q_n$ be the
	block of $Q$ incident with both $v_n$ and $v_{n+1}$ containing these two vertices (a \defi{block} is a maximally 2-connected subgraph).
	We will show that in $Q_n$ the effective resistence from $v_n$ to
	$v_{n+1}$ is bounded away from 0, from which the recurrence of $T$ will follow using Lyons' criterion.

	Let $d=|L_{n+1}|$. We claim that there is some constant $c=c(d)$ only depending
	on $d$ such that there are at most $c$ vertices of $Q_n$ with a degree
	greater than 2: indeed, $Q_n \sm \{v_n, v_{n+1}\}$ is a forest with $d(v_n) + d(v_{n+1})$ leaves.
	
	Next, we observe that $Q_n$ has maximum degree at most $d$.
	Furthermore, the distance between $v_n$ and $v_{n+1}$ in $Q_n$ is at least $Z_n$, which ---by the choice of $f$--- is huge compared to $d$ and so also compared to $c$.
	Hence it remains to prove the following:
	
	\begin{lem}
		For every constant $C$ and every $m$ there is some $s= s(m,C)$, such
		that for every finite graph $K$ with maximum degree at most $C$ and at most $C$ vertices of degree greater than $2$, and any two vertices $x,y$ of $K$ at	distance $x$ and $y$ at least $s$, the effective resistence
		between $x$ and $y$ in $K$ is at least $m$.
	\end{lem}
	
	\begin{proof}
		We start with a large $R\in \N$ the value of which we reveal later, and set $s=R \cdot C$.
		
		Let $K'$ be the graph obtained from $K$ by contracting all
		vertices of degree $2$. We colour an edge of $K'$ black if it is
		subdivided at least $R$ times in $K$.
		Note that $K'$ has at most $C$ vertices. 
		Thus every $x$-$y$-path in $K'$ has
		length at most $C$, but in $K$ any such path has length at least $s$. Therefore each $x$-$y$-path in $K'$ contains a black edge. Hence in $K'$ there is an $x$-$y$-cut
		consisting of black edges only. This cut has at most $C^2$ edges.
		Thus the effective resistence in $K$ between $x$ and $y$ is at least the
		one of that cut considered as a set of paths in $K$, which is as large as we want:
		indeed, we can pick $R$ so large that the latter resistance exceeds $m$.
	\end{proof}
	
	Now we reveal how large we have picked $f(n)$:
	recall that $d=2^{n+1}$ and that $|Z_n|=f(n)-3\cdot 2^n$. 
	We pick $f(n)$ large enough  that $|Z_n|\geq s(1,max(c(d),d))$, where $s$ is as given by the last lemma. 
	With these choices the effective resistence between $v_n$ and $v_{n+1}$ in $Q_n$ is at least $1$. So $Q$ cannot be transient by Lyons' criterion (\Tr{lyoCr}). By Rayleigh's monotonicity law \cite{LyonsBook}, $T$ is recurrent too. 
	
\end{proof}

\subsection{Another transient hyperbolic graph with no transient subtree} \label{subtree2}

We now sketch another construction of a transient hyperbolic graph with no transient subtree, provided by Tom Hutchcroft and Asaf Nachmias (private communication).
\medskip

Let $[0,1]^3$ be the unit cube. For each $n\geq0$, let $D_n$ be the set of closed dyadic subcubes of length $2^{-n}$.
For each $n\geq 0$, let $G_n$ be the graph with vertex set $\bigcup_{i=0}^n D_i$, and where two cubes $x$ and $y$ are adjacent if and only if
\begin{itemize}
	\item
	$x \supset y$, $x\in D_i$ and $y\in D_{i+1}$ for some $i\in \{0,\ldots n-1\}$, 
	\item $y \supset x$, $y\in D_i$ and $x\in D_{i+1}$ for some $i\in \{0,\ldots n-1\}$, or
	\item $x,y\in D_i$ for some $i\in \{0,\ldots, n\}$ and $x\cap y$ is a square. 
\end{itemize}
Then the graphs $G_n$ are uniformly Gromov hyperbolic and, since the subgraph of $G_n$ induced by $D_n$ is a cube in $\mathbb{Z}^3$ (of size $4^n$), the effective resistance between two corners this cube are bounded above uniformly in $n$. Moreover, the distance between these two points in $G_n$ is at least $n$.

Let $T$ be a binary tree, and let $G$ be the graph formed by replacing each edge of $T$ at height $k$ from the root with a copy of $G_{3^k}$, so that the endpoints of each edge of $T$ are identified with opposite corners in the corresponding copy of $D_{3^k}$. Since the graphs $G_n$ are uniformly hyperbolic and $T$ is a tree, it is easily verified that $G$ is also hyperbolic. The effective resistance from the root to infinity in $G$ is at most a constant multiple of the effective resistance to infinity of the root in $T$, so that $G$ is transient. 
However, $G$ does not contain a transient tree, since every tree contained in $G$ is isomorphic to a binary tree in which each edge at height $k$ from the root has been stretched by at least $3^k$, plus some finite bushes.

\section{A transient graph with no embeddable transient subgraph} \label{Kr}

We say that a graph $H$ has a graph $K$ as a \defi{minor}, if $K$ can be obtained from $H$ by deleting vertices and edges and by contracting edges. Let $K^r$ denote the complete graph on $r$ vertices.

\begin{prop} \label{propKr}
There is a transient bounded degree graph $G$ such that every transient subgraph of $G$ has  a $K^r$ minor for every $r\in \N$.
\end{prop}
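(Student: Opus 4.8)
The plan is to build $G$ as an infinite series of two-terminal \emph{gadgets} and reduce the statement to a single resistance-versus-minor lemma about these gadgets. Concretely, I would take a sequence of finite bounded-degree graphs $B_n$, each with two distinguished terminals $x_n,y_n$, and glue them into a chain by identifying $y_n$ with $x_{n+1}$; write $v_n$ for this common vertex and let $o=x_0$. Each $v_n$ is then a cut vertex of $G$ separating $o$ from infinity, so along any ray of any subgraph one passes through every $v_n$, and $G$ is a series network of the $B_n$. By the series law the effective resistance from $o$ to infinity in $G$ equals $\sum_n R_n$, where $R_n$ is the effective resistance of $B_n$ between $x_n$ and $y_n$; choosing the $B_n$ so that $\sum_n R_n<\infty$ makes $G$ transient by \Tr{lyoCr}. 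The same series law applies verbatim inside any subgraph $H\subseteq G$: if $H$ is transient then $\sum_n R_n(H)<\infty$, where $R_n(H)$ is the effective resistance of $H\cap B_n$ between $v_n$ and $v_{n+1}$ (infinite when $H$ fails to connect them), and by Rayleigh monotonicity $R_n(H)\ge R_n$.

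The core is therefore a gadget lemma: I would construct bounded-degree gadgets $B_n$, together with thresholds $\rho_n>0$ and integers $r(n)\to\infty$, such that (i) $\sum_n R_n<\infty$, (ii) $\sum_n \rho_n=\infty$, and (iii) \emph{every} subgraph of $B_n$ joining $x_n$ to $y_n$ with effective resistance less than $\rho_n$ contains $K^{r(n)}$ as a minor. Granting this, the proposition follows immediately: since $\sum_n R_n(H)<\infty$ while $\sum_n\rho_n=\infty$, it is impossible that $R_n(H)\ge \rho_n$ for all large $n$, so there are infinitely many $n$ with $R_n(H)<\rho_n$; for each such $n$ property (iii) gives a $K^{r(n)}$ minor inside $H\cap B_n\subseteq H$, and as $r(n)\to\infty$ this yields a $K^r$ minor in $H$ for every $r$.

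To prove the gadget lemma I would argue by contrapositive, showing that a $K^r$-minor-free subgraph routing current across $B_n$ is forced to have large effective resistance. The two tools are the fact that $K^r$-minor-free graphs admit small balanced separators (Alon--Seymour--Thomas: balanced separators of size $O(r^{3/2}\sqrt{|V|})$) and the Nash--Williams lower bound, which turns a sequence of disjoint small edge-cuts $C_k$ separating the terminals into a lower bound $\sum_k 1/|C_k|$ on the resistance. The gadget $B_n$ must be engineered so that a minor-free flow between the terminals is necessarily squeezed through many disjoint small cuts: a natural candidate is a bounded-degree emulation of a complete graph (or of an expander) on $N_n$ nodes whose edges are subdivided into long paths, so that the terminals lie at distance tending to infinity while the full gadget, exploiting the clique/expander structure in parallel, has resistance $R_n\to 0$.

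The main obstacle is exactly property (iii), and the reason it is delicate is that large distance together with small resistance does \emph{not} by itself force a clique minor: a long, very wide planar grid realises resistance tending to $0$ between terminals at distance tending to infinity while remaining $K^5$-minor-free. Thus the gadget cannot merely be ``long and thin''; its wiring must make such flat, low-genus spreading of the current impossible, so that any subgraph avoiding a $K^r$ minor is compelled---via the separator bound---to concentrate its flow through bottlenecks that Nash--Williams then penalises with resistance at least $\rho_n$. Making this incompatibility between bounded-genus (equivalently, bounded clique-minor) routing and low resistance quantitative, while simultaneously keeping $\sum_n R_n$ finite and the degree bounded, is the crux of the construction.
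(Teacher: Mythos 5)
Your reduction to a per-gadget lemma is sound, and it matches the skeleton of the paper's argument (the paper glues its gadgets along a binary tree rather than in series, which lets each gadget have resistance $O(1)$ while the parallel copies at level $r$ keep the total resistance summable; your series variant would instead need the $R_n$ themselves summable, but that is a cosmetic difference). The genuine gap is exactly the one you flag yourself: property (iii), the statement that every terminal-to-terminal subgraph of $B_n$ with resistance below $\rho_n$ contains a large clique minor, is never established, and your candidate gadget (a subdivided expander or clique emulation) is not shown to have it. Your own planar-grid example shows why the route through separator theorems and Nash--Williams is delicate; you correctly identify the crux but do not resolve it, so the proof is incomplete.

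The paper closes this gap with a much more elementary mechanism that you may find clarifying. Its gadget $D_n$ has levels $-n,\ldots,n$ with level $i$ containing $2^{n-|i|}$ vertices, consecutive levels joined by a \emph{complete bipartite} graph whose edges each carry resistance $2^{n-|i|}$ (subdivided into unit-resistance paths to control degree). The full layer has about $2^{2(n-|i|)}$ parallel edges, so its resistance is about $2^{-(n-|i|)}$ and the whole gadget has resistance $O(1)$. But if a subgraph $H$ has no $K^r$ minor, then by the Mader--Kostochka average-degree theorem the bipartite graph $H$ induces between two consecutive levels has at most $c_r\cdot 2^{n-|i|}$ edges, each of resistance $2^{n-|i|}$, so even after shorting each level to a point the layer resists at least $1/c_r$; summing over $\Theta(n)$ layers forces resistance $\Omega(n)$ across the gadget, and summing over the tree gives infinite resistance to infinity. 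In other words, the incompatibility you were trying to engineer between minor-freeness and low resistance is obtained not from topology or separators but from a pure edge count: the resistances are tuned so that low resistance across a layer requires retaining a constant fraction of a complete bipartite graph, and density alone already yields the clique minor.
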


In particular, $G$ has no transient subgraph that embeds in any surface of finite genus.

We now construct this graph $G$.
We will start with the infinite binary tree with root $o$, and  replace each edge at distance $r$ from $o$ with a gadget $D_{2^r}$ which we now  define. 
Given $n\ (=2^r)$, the vertices of $D_n$ are organized in $2n+1$ levels numbered $-n, \ldots,-1,0,1, \ldots, n$. Each level $i$ has $2^{n-|i|}$ vertices,  and two levels $i,j$ form a complete bipartite graph whenever $|i-j| = 1$; otherwise there is no edge between levels $i,j$. 
Any edge of $D_n$ from level $i\geq 0$ to level $i+1$ or from level $-i$ to level $-(i+1)$ is given a resistance equal to $2^{n-|i|}$ (we will later subdivide such edges into paths
of that many edges each having resistance 1). With this choice, the effective resistance $R_i$ between levels $i$ and $i+1$ of $D_n$ is $2^{n-|i|}$ divided by the number of edges between those two levels, that is, $R_i=\frac{2^{n-|i|}}{2^{n-|i|} 2^{n-|i|-1}} =2^{-n+|i|+1}$, and so the effective resistance in $D_n$ between its two vertices at levels $n$ and $-n$ is $O(1)$

Let $G'$ be the graph obtained from the infinite binary tree with root $o$ by replacing each edge $e$ at distance $n$ from $o$ with a disjoint copy of $D_n$, attaching the two vertices at levels $n$ and $-n$ of $D_n$ to the two end-vertices of $e$. We will later modify $G'$ to obtain a bounded degree $G$ with similar properties satisfying Proposition~\ref{propKr}.

Note that as $D_n$ has effective resistance $O(1)$, the graph $G'$ is transient by Lyons' criterion. 

We are claiming that if $H$ is a transient subgraph of $G'$, then $H$ has a $K^r$ minor for every $r\in \N$. 

This will follow from the following basic fact of finite extremal graph theory \cite{Mader,Kostochka,diestelBook05}

\begin{thm}\label{mader} 
	For every $r\in \N$ there is a constant $c_r$ \st\  every graph of average degree at least $c_r$ has a $K^r$ minor.
\end{thm}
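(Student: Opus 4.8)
The plan is to prove the explicit quantitative statement that any graph $G$ with edge density $\varepsilon(G):=|E(G)|/|V(G)|\geq 2^{r-2}$ — equivalently, average degree at least $2^{r-1}$ — already contains $K^r$ as a minor; this suffices, with $c_r=2^{r-1}$. I would argue by induction on $r$, the cases $r\leq 2$ being immediate. For the inductive step the first move is to pass from $G$ to a \emph{minor-minimal} witness: among all minors $H$ of $G$ with $\varepsilon(H)\geq 2^{r-2}$, fix one with the fewest vertices. Since both deleting a vertex and contracting an edge of $H$ produce a minor on fewer vertices, minimality forces the density to drop strictly below $2^{r-2}$ under any such operation, and this rigidity is what drives the whole argument.

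The key step is to extract from minimality the fact that \emph{every edge $uv$ of $H$ has at least $2^{r-2}$ common neighbours}. Indeed, contracting $uv$ yields a simple graph on $|V(H)|-1$ vertices with exactly $1+|N(u)\cap N(v)|$ fewer edges: the edge $uv$ itself, together with one merged parallel edge for each common neighbour. Writing $\gamma=2^{r-2}$ and combining the minimality inequality $\varepsilon(H/uv)<\gamma$ with $|E(H)|\geq\gamma|V(H)|$ gives $|N(u)\cap N(v)|\geq\gamma$ after a one-line rearrangement.

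I would then exploit this locally. Fix any vertex $v$ of $H$ and consider the subgraph $H[N(v)]$ induced on its neighbourhood. For each $u\in N(v)$, the common neighbours of $u$ and $v$ all lie in $N(v)$ and are adjacent to $u$, so $u$ has at least $\gamma=2^{r-2}$ neighbours inside $N(v)$; hence $H[N(v)]$ has minimum degree at least $2^{r-2}$ and therefore edge density at least $2^{r-3}$. By the induction hypothesis $H[N(v)]$ contains a $K^{r-1}$ minor, and since $v$ is adjacent to every vertex of $N(v)$, adjoining $\{v\}$ as one further branch set upgrades this to a $K^r$ minor of $H$, and hence of $G$. This closes the induction.

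The subtle point — and the one I expect to require the most care — is that edge density is not monotone under passing to subgraphs, so one cannot simply delete vertices to tidy up $G$; the minor-minimality device is exactly what converts the global density hypothesis into the usable local statement that neighbourhoods are dense. One must also keep the count of edges lost under contraction correct, which is where working with simple graphs (so that parallel edges are identified) matters. Finally, I note that the exponential bound $c_r=2^{r-1}$ obtained this way is far from optimal — Kostochka and Thomason showed $c_r=\Theta(r\sqrt{\log r})$ — but any finite bound suffices for our application in \autoref{propKr}, so there is no need to pursue the sharp constant.
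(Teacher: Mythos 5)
Your proof is correct: it is the classical Mader argument (minor-minimal dense witness, every edge in many triangles, induction on $r$ via the neighbourhood of a vertex), which is exactly the proof given in the textbook reference the paper cites -- the paper itself states \autoref{mader} without proof, deferring to Mader, Kostochka and Diestel. Your constant $c_r=2^{r-1}$ is a factor of $2$ weaker than the standard $2^{r-2}$, but any finite $c_r$ suffices for the application in \autoref{propKr}.
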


\begin{lem} \label{lemKr}
If $H$ is a transient subgraph of $G'$, then $H$ has a $K^r$ minor for every $r\in \N$.	
\end{lem}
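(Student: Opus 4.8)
The plan is to reduce the statement to the extremal fact \Tr{mader}. Since $r$ is arbitrary, to find a $K^r$ minor in $H$ for every $r$ it suffices to show that $H$ contains finite subgraphs of arbitrarily large average degree; for then \Tr{mader} supplies the minors directly. I would prove this in contrapositive form: assuming that there is a constant $c$ with the property that every finite subgraph of $H$ has average degree less than $c$, I would derive that $H$ is recurrent, contradicting transience via \Tr{lyoCr}. The bounded-average-degree hypothesis is convenient because it controls, in each gadget $D_n$ and at each level $i$, the number of edges of $H$ running between level $i$ and level $i+1$: since these edges span at most $2^{n-|i|}+2^{n-|i|-1}$ vertices and the induced bipartite subgraph has average degree below $c$, there are at most $O(c\,2^{n-|i|})$ of them.

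The core step is a per-gadget effective-resistance estimate. The level cuts of $D_n$ form a nested family of edge cuts separating its two poles, so by the series law the effective resistance of $H\cap D_n$ between the poles is at least the sum, over the levels $i$, of the resistance of the corresponding layer; and by the parallel law each layer has resistance at least (resistance of one edge)/(number of $H$-edges in the layer), i.e.\ at least $2^{n-|i|}/O(c\,2^{n-|i|})$. The point I would exploit here is the exponential growth of the edge-resistances toward the middle of $D_n$: a subgraph that is sparse at \emph{every} level cannot simultaneously keep all these layer resistances small, so the sparsity hypothesis forces the pole-to-pole effective resistance of $H\cap D_n$ to be large compared with the $O(1)$ resistance of the full gadget. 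Quantifying exactly how large is the technical heart of the argument.

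Finally I would globalise. The graph $G'$ is, up to the gadgets, the binary tree: the poles are cut vertices and $H$ inherits a block decomposition in which consecutive blocks are the traces of the gadgets $D_n$ glued along these cut vertices. Treating each gadget as a single weighted edge carrying its pole-to-pole effective resistance and applying the Nash--Williams criterion with the spherical cuts of the underlying binary tree \cite{LyonsBook}, the recurrence of $H$ would follow once the per-gadget resistance lower bound is strong enough, after which Rayleigh monotonicity transfers recurrence back to $H$. The main obstacle is exactly this last quantitative matching: at tree-distance $n$ there are about $2^n$ gadgets in parallel, which by itself would make the series $\sum_n(\text{resistance at level }n)$ converge, so the per-gadget resistance forced by sparsity must be shown to grow fast enough with $n$ to beat this exponential branching. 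Making the resistance estimate of the previous paragraph sharp enough to overcome the $2^n$ factor---rather than merely a constant or polynomial gain---is where the delicate use of the exponential resistance profile of $D_n$ is unavoidable, and I expect it to be the crux of the proof.
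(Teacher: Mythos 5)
Your proposal follows essentially the same route as the paper's proof: argue the contrapositive, use \Tr{mader} to bound the average degree of each bipartite level graph of $H$ inside a gadget, convert that into a per-layer resistance lower bound via the series/parallel (Nash--Williams) laws, and then globalise over the binary tree using \Tr{lyoCr}. The two points you defer as ``the technical heart'' and ``the crux'' are in fact immediate from the bound you already wrote down: each layer of $D_n\cap H$ has effective resistance at least $2^{n-|i|}/O(c\,2^{n-|i|})=\Omega(1/c)$ \emph{uniformly} in $i$, so summing over the $2n+1$ layers gives pole-to-pole resistance $\Omega(n/c)$; since the gadget at tree-distance $r$ from the root is $D_{2^r}$, this is $\Omega(2^r/c)$, which exactly cancels the $2^r$ parallel branches and makes the resistance to infinity diverge like $\sum_r\Omega(1/c)$. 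No delicate use of the resistance profile is needed: the exponential per-gadget gain you worry about is built into the construction through the choice $n=2^r$, not extracted from a sharper resistance estimate.
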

\begin{proof}
Suppose that $H$ has no $K^r$ minor for some $r$, and fix any $m\in \N$.
 Consider for every copy $C$ of the gadget $D_n$ in $G'$
 where $n >m$, the bipartite subgraph $G_m=G_m(C)$ of $H$ spanned by levels
 $m$ and $m+1$ of $C \cap H$. By Theorem~\ref{mader}, the average degree of
 $G_m$ is at most $c_r$. Thus, if we identify each of the partition
 classes of $G_m$ into one vertex, we obtain a graph with 2 vertices
 and at most $\frac{3}{2}  2^{n-m} c_r$ parallel edges, each of resistance $2^{n-m}$,
 so that the effective resistance of the contracted graph is greater than $1/c_r =:C_r$. 

  Now repeating this argument for $m+1, m+2, \ldots$, 
 we see that the effective resistance between the two partition classes of $G_{m+k}$ (which is edge-disjoint to $G_m$) is also at least the same constant $C_r$.
 This easily implies that the effective resistance between the two endvertices of $C\cap H$ for any copy $C$ of $D_n$ is $\Omega(n)$. Since $G'$ has $2^r$ copies of $D_{2^r}$ at each `level' $r$, we obtain that the effective resistance from $o$ (which we may assume without loss of generality to be contained in $H$) to infinity in $H$ is $\Omega(\sum_r 2^r/2^r) = \infty$. 
 
 Thus $H$ can have no electrical flow from a vertex to infinity, and
 by Lyons' criterion (\Tr{lyoCr}) it is not transient.
\end{proof}

Recall that the edges of $G'$ had resistances greater than 1. By replacing each edge of resistance $k$ by a path of length $k$ with edges having resistance 1, we do not affect the transience of $G'$. We now modify $G'$ further into a graph $G$ of bounded degree, which will retain the desired property.

Let $x$ be a vertex of some copy $C$ of $D_n$, at some level $j\neq n,-n$ of $C$. Then $x$ sends edges to the two neighbouring levels $j\pm 1$. Each of those levels $L, L'$, sends $2^{k\pm 1}$ edges to $x$ for some $k$. Now disconnect all the edges from  $L$ to $x$, attach a binary tree $T_L$ of depth $k \pm 1$ to $x$, and then reconnect those edges, one at each leaf of $T_L$. Do the same for the other level $L'$, attaching a new tree $T_{L'}$ of appropriate depth to $x$. Note that after doing this for every such $x$, the graph $G$ obtained has maximum degree 6 (we do not need to modify the vertices at levels $n,-n$ in $C$, as they already had degree 6.

Now let's check that $G$ is still transient, by considering the obvious flow to infinity. Each new tree of the form $T_L$ we attached has effective resistance from its root to the union of its leaves less that 1. Moreover, between any two levels of size about $2^k$ in some $D_n$ we introduced as many such trees as there are vertices in the levels. An easy calculation yields that the extra effective resistance we introduced between two levels is about $2/2^k$; hence the total resistance we introduced to each $D_n$ taking into account all its levels is bounded by a constant. Thus the effective resistance of each $D_n$ remains bounded by a constant (independent of $n$), and so $G$ is still transient.

Note that $G'$ can be obtained from $G$ by contracting edges. Thus any transient $H \subseteq G$ has a transient minor $  H' \subseteq G'$, because contracting edges preserves transience by Lyons' criterion. As we have proved that $H'$ has a $K^r$ minor (\Lr{lemKr}), so does $H$ as any minor of $H'$ is a minor of $H$.

\medskip
Despite Proposition~\ref{propKr}, the following remains open
\begin{que}[I.~Benjamini (private communication)] Does every bounded-degree transient graph have a transient subgraph which is sphere-packable in $R^3$?
\end{que}

\section{Problems}

It is not hard to see that our hyperbolic souvlaki $\Psi$ is \defi{amenable}, that is, we have $\inf_{\emptyset\neq S\subset \Psi\text{ finite}}\dfrac{|\partial S|}{|S|}=0$,
where $\partial S=\{v\in V(\Psi)\setminus S\mid\text{there exists }w\in S\text{ adjacent to }v\}$. We do not know if this is an essential feature:

\begin{problem}
Is there a non-amenable counterexample to \Cnr{bsconj}?  
\end{problem}

Similarly, one can ask
\begin{problem}
	Is there a 	non-amenable, hyperbolic graph with bounded-degrees, $C$-dense infinite geodesics, and the Liouville property, the hyperbolic boundary of which consists of a single point? 
\end{problem}
Here we did not ask for transience as it is implied by non-amenability \cite{BeSchrChee}.

We conclude with further questions asked by I.~Benjamini (private communication)
\begin{problem}
Is there a uniformly transient counterexample to \Cnr{bsconj}?  
Is there an 1-ended counterexample?
\end{problem}
Here \defi{uniformly transient} means that there is an upper bound on the effective resistance between any vertex of the graph and infinity.

\bibliographystyle{plain}
\bibliography{collective}

\end{document}